\documentclass[reqno,11pt]{amsart}
\usepackage[latin1]{inputenc}
\usepackage{amsmath, amsthm, amssymb,mathrsfs,amsfonts}
\usepackage{array}
\usepackage{graphicx}
\usepackage[margin=3cm]{geometry}
\usepackage[spanish,portuguese,english]{babel}
\usepackage{enumerate}
\usepackage{multicol}
\newtheorem{teor}{Theorem}[section]

\newtheorem{lema}{Lemma}[section]

\newtheorem{cor}{Corollary}[section]


\def\t{\mathop{\rm t }\nolimits}
\def\tl{\mathop{\rm t^L}\nolimits}
\def\tnil{\mathop{\rm t_{nil}}\nolimits}
\def\cl{\mathop{\rm cl}\nolimits}


\title[Symmetric elements under oriented involutions]{Lie nilpotency indices of symmetric elements under oriented involutions in group algebras}
\author[J.H. Castillo]{John H. Castillo}
\address{John H. Castillo, Departamento de Matemáticas y Estadística, Universidad de Nariño}
\email{jhcastillo@udenar.edu.co--jhcastillo@gmail.com}
\subjclass[2010]{16W10, 16U80, 16U60} \keywords{Involution,
symmetric elements, Lie nilpotent, strongly Lie nilpotent, Lie
nilpotency index, nilpotency class}
\begin{document}
\maketitle
 \noindent
\begin{abstract}
Let $G$ be a group and let $F$ be a field of characteristic
different from $2$. Denote by $(FG)^+$ the set of symmetric elements
and by $\mathcal{U}^+(FG)$ the set of symmetric units, under an
oriented classical involution of the group algebra $FG$. We give
some lower and upper bounds on the Lie nilpotency index of $(FG)^+$
and the nilpotency class of $\mathcal{U}^+(FG)$.
\end{abstract}

\section{Introduction}
Let $FG$ denote the group algebra of a group $G$ over a field $F$
with $char(F)=p\neq 2$. A homomorphism $\sigma:G\rightarrow \{ \pm
1\}$  is called an \textit{orientation} of the group $G$. Working in
the context of $K$-theory, Novikov \cite{NOV}, introduced  an
\textit{oriented involution} $*$ of $FG$, given by
$$\left(\sum_{g\in G} \alpha_gg\right)^{*}=\sum_{g\in G}\alpha_g\sigma(g)g^{-1}.$$
When $\sigma$ is trivial this involution coincides with the so
called \textit{classical involution} of $FG$.

We denote $(FG)^{+}=\{\alpha\in FG:\alpha^{*}=\alpha\}$ and
$(FG)^{-}=\{\alpha\in FG:\alpha^{*}=-\alpha\}$ the set of symmetric
and skew-symmetric elements of $FG$ under $*$, respectively. We
denote by $N$ the kernel of $\sigma$. It is obvious that the
involution $*$ coincides on the group algebra $FN$ with the
classical involution.
 It is easy to see that,
as an $F$-module, $(FG)^{+}$ is generated by the set
\begin{align*}
\mathcal{S}=\{g+g^{-1}:g\in N \}\cup\{g-g^{-1}:g\in G\setminus N,g^2\neq 1\}
\end{align*}
and $(FG)^{-}$ is generated by
\begin{align*}
\mathcal{L}=\{g+g^{-1}:g\in
G\setminus N\}\cup\{g-g^{-1}:g\in N,g^2\neq 1\}.
\end{align*}

Given $g_1, g_2\in G$, we define the commutator
$(g_1,g_2)=g_1^{-1}g_2^{-1}g_1g_2$ and recursively, \linebreak
$(g_1,\ldots,g_{n})=((g_1,\ldots,g_{n-1}),g_{n})$ for $n$ elements
$g_1,\ldots,g_n$ of $G$. By the commutator $(X,Y)$ of the subsets
$X$ and $Y$ of $G$ we mean the subgroup of $G$ generated by all
commutators $(x,y)$ with $x\in X$, $y\in Y$. In this way, we can
define the lower central series of a nonempty subset $H$ of $G$ by:
$\gamma_1(H)=H$ and $\gamma_{n+1}(H)=(\gamma_n(H),H)$, for $n\geq
1$. We say that $H$ is nilpotent if $\gamma_n(H)=1$, for some $n$.
For a nilpotent subset $H\subseteq G$ the number $\cl(H)=\min\{n\in
\mathbb{N}_0: \gamma_{n+1}(H)=1\}$ is called the nilpotency class of
$H$. It can be proved that $H$ is a nilpotent set if and only if $H$
satisfies the group identity $(g_1,\ldots,g_n)=1$ for some $n\geq
2$.

In an associative ring $R$,  the Lie bracket on two elements  $x,y
\in R$ is defined by $[x,y]=xy-yx$. This definition is extended
recursively via $[x_1,\ldots,x_{n+1}]=[[x_1,\ldots,x_n],x_{n+1}]$.
For $X,Y\subseteq R$ by $[X,Y]$ we denote the additive subgroup
generated by all Lie commutators $[x,y]$ with $x\in X, y\in Y$. The
lower Lie central series of a nonempty subset $S$ of $R$ is defined
inductively by setting $\gamma^1(S)=S$ and
$\gamma^{n+1}(S)=[\gamma^n(S),S]$. We say that the subset $S$ is Lie
nilpotent if there exists a natural number $n$, such that
$\gamma^n(S)=0$. The smallest natural number with the last property,
denoted by $t(S)$, is called the Lie nilpotency index of $S$. It is
possible to show that $S$ is Lie nilpotent if and only if $S$
satisfies the polynomial identity $[x_1,\ldots,x_n]=0$ for some
$n\geq 2$.

Given a nonempty subset $S$ of $R$, we let $S^{(1)}=R$, and then for
each $i\geq 2$, let $S^{(i)}$ be the (associative) ideal of $R$
generated by all elements of the form $[a,b]$, with $a\in
S^{(i-1)},b\in S$. We say that $S$ is strongly Lie nilpotent if
$S^{(i)}=0$ for some $i$. The minimal $n$ for which $S^{(n)}=0$ is
called the upper Lie nilpotency index and denoted by $\tl(S)$.
Clearly, strong Lie nilpotence implies Lie nilpotence and $\t(S)\leq
\tl(S)$. Denote by $\mathcal{U}(S)$ the set of units in the subset
$S$ of $R$ and suppose that it is nonempty. By the equality
$(x,y)=1+x^{-1}y^{-1}[x,y]$, it is easy to see that
$\gamma_n(\mathcal{U}(S))\subseteq 1+S^{(n)}$ for all $n\geq 2$. In
consequence, the set of units of a strongly Lie nilpotent subset $S$
is nilpotent, and
\begin{equation}\label{eq1}
\cl(\mathcal{U}(S))<\tl(S).
\end{equation}

 In 1973,   Passi, Passman and Sehgal \cite{PPS73} showed that
the group algebra $FG$ is Lie nilpotent if and only if $G$ is
nilpotent and $G'$ is a  finite $p$-group, where $p$ is the
characteristic of $F$. Actually, see \cite{S78}, a group algebra is
Lie nilpotent if and only if it is strongly Lie nilpotent. Next,
S.K. Sehgal characterized group algebras which are Lie $n$-Engel,
for some $n$.

In 1993,  Giambruno and Sehgal \cite{SG93} began the study of Lie
nilpotence of
 symmetric  and skew-symmetric elements  under the classical
involution. They proved that given a group $G$ without elements of
order $2$ and a field $F$ with $char(F)\neq 2$, if either $(FG)^+$
or $(FG)^-$  is Lie nilpotent, then $FG$ is Lie nilpotent. This work
was completed by G.T. Lee \cite{Lee99}, for groups in general. More
specifically, he proved that the Lie nilpotence of the symmetric
elements under the classical involution is equivalent to the Lie
nilpotence of $FG$ when the group $G$ does not contain a copy of
$Q_8$, the quaternion group of order $8$ and he also characterized
the group algebras such that the set of symmetric elements is Lie
nilpotent when $G$ contains a copy of $Q_8$.

Recently, Castillo and Polcino Milies, see \cite{CP12}, studied Lie
properties of the symmetric elements under an oriented classical
involution. They extended some previous results from \cite{SG93},
\cite{Lee99} and \cite{Lee00}. In particular, they gave some groups
algebras such that the Lie nilpotence of the symmetric set implies
the same property in the whole group algebra. Also, they obtained a
complete characterization of the group algebras $FG$, such that
$Q_8\subseteq G$ and $(FG)^+$ is Lie nilpotent.

Lately, Z. Balogh and T. Juhász in \cite{BJ11} and \cite{BJ12}
studied the Lie nilpotency index of $(FG)^+$ and the nilpotency
class of the $\mathcal{U}^+(FG)$ under the classical involution in
group algebras. They gave a necessary condition to the numbers
$\t((FG)^+)$ and $\cl(\mathcal{U}^+(FG))$ be maximal, as possible,
in a nilpotent group algebra. Also, they studied this two numbers to
group algebras such that $(FG)^+$ is Lie nilpotent but $FG$ is not.

In this article we study the Lie nilpotency index  of $(FG)^+$ and
the nilpotency class of $\mathcal{U}^+(FG)$ under an oriented
classical involution. In the next section we give some preliminary
results. In the third section we study the numbers $\t((FG)^+)$ and
$\cl(\mathcal{U}^+(FG))$ in Lie nilpotent group algebras. In the
fourth section we study the case when $Q_8\subseteq G$ and $(FG)^+$
is Lie nilpotent.

Throughout this paper $F$ will always denote a field of
characteristic not $2$, $G$ a group and $\sigma$ a nontrivial
orientation of $G$. In a number of places, all over this paper, we
use arguments from \cite{BJ11}, \cite{BJ12} and \cite{Lee10}. Some
of them are reproduced here for the sake of completeness.

\section{Preliminaries}

 We recall the following result from \cite{Lee10}.

\begin{lema}\label{lema2.1}
Let $R$ be a ring and $S$ a subset of $R$. Suppose, for some $i\geq
1$, that $S^{(i)}\subseteq zR$, where $z$ is central in $R$. Then
for all $j>0$, we have $S^{(i+j)}\subseteq zS^{(j)}$. In particular,
for any positive integer $m$, $S^{(mi)}\subseteq z^m R$.
\end{lema}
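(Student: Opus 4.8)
The plan is to prove the first statement by induction on $j$, and then deduce the ``In particular'' clause by a second, shorter induction on $m$ that feeds the first result back into itself. Before starting, I would record the elementary fact that the ideals form a descending chain, $S^{(k)}\subseteq S^{(k-1)}$ for every $k\geq 2$: each generator $[a,b]$ with $a\in S^{(k-1)}$ and $b\in S\subseteq R$ already lies in the ideal $S^{(k-1)}$, since $ab,ba\in S^{(k-1)}$, so the ideal these generators produce is contained in $S^{(k-1)}$. This disposes of the base case $j=1$ for free, because $S^{(i+1)}\subseteq S^{(i)}\subseteq zR=zS^{(1)}$.

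For the inductive step, suppose $S^{(i+j)}\subseteq zS^{(j)}$. The key observation is that, because $z$ is central and $S^{(j)}$ is an additive subgroup of $R$, the set $zS^{(j)}=\{zs:s\in S^{(j)}\}$ is itself a two-sided ideal of $R$. Now take an arbitrary generator $[a,b]$ of $S^{(i+j+1)}$, with $a\in S^{(i+j)}$ and $b\in S$. By the induction hypothesis I may write $a=zs$ for some $s\in S^{(j)}$, and then centrality of $z$ yields $[a,b]=[zs,b]=z[s,b]$, where $[s,b]$ is by definition one of the generators of $S^{(j+1)}$. Hence every generator of $S^{(i+j+1)}$ lies in the ideal $zS^{(j+1)}$, and therefore so does the whole ideal they generate. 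This closes the induction and establishes $S^{(i+j)}\subseteq zS^{(j)}$ for all $j>0$.

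For the final assertion I would induct on $m$, the case $m=1$ being exactly the hypothesis $S^{(i)}\subseteq zR$. Assuming $S^{(mi)}\subseteq z^{m}R$, I apply the inclusion just proved with $j=mi$ to obtain $S^{((m+1)i)}=S^{(i+mi)}\subseteq zS^{(mi)}\subseteq z\cdot z^{m}R=z^{m+1}R$, which completes the argument.

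I expect no genuine obstacle in this proof; it is a clean double induction. The only points that warrant care are the bookkeeping observation that $zS^{(j+1)}$ is an ideal (so that containment of the generators upgrades to containment of the ideal they generate), and the repeated appeal to the centrality of $z$ to pull it outside the Lie brackets in the identity $[zs,b]=z[s,b]$.
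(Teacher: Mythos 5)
Your proof is correct and takes essentially the same route as the paper: induction on $j$, using centrality of $z$ to pull it out of the bracket $[zs,b]=z[s,b]$, followed by a short induction on $m$ feeding the first part back in with $j=mi$ (you are in fact more careful than the paper about upgrading containment of the generators to containment of the whole ideal $S^{(i+j+1)}$). One small repair: $zS^{(j+1)}$ is an ideal not because $S^{(j+1)}$ is merely an additive subgroup (that alone would not suffice), but because $S^{(j+1)}$ is by definition an ideal of $R$ and $z$ is central, so $r(zs)=z(rs)$ and $(zs)r=z(sr)$ stay inside $zS^{(j+1)}$.
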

\begin{proof}
The proof is by induction on $j$. If $j=1$, then $S^{(i+1)}\subseteq
S^{(i)}$, there is nothing to do. Assume that $S^{(i+j)}\subseteq
zS^{(j)}$. Take $a\in S^{(i+j)}, b\in S$. So $a=za_1$, for some
$a_1\in S^{(j)}$. Thus, $[a,b]=[za_1,b]=z[a_1,b]\in zS^{(i+j)}$, as
we want to prove.

To get the second part, notice that
$$S^{(2i)}=S^{(i+i)}\subseteq zS^{(i)}\subseteq z^2R.$$
Suppose that $S^{((m-1)i)}\subseteq z^{m-1}R$. So
$S^{(mi)}=S^{((m-1)i+i)}\subseteq zS^{((m-1)i)}\subseteq z^m R$.
\end{proof}

Throughout this article we denote by $Q_8=\left<x,y:x^4=1,x^2=y^2,x^y=x^{-1}\right>$
the quaternion group of order $8$. Castillo and Polcino Milies \cite{CP12} characterized the group
algebras of groups containing $Q_8$ and with a nontrivial orientation, such that $(FG)^+$
is Lie nilpotent. Here we prove that the conditions obtained by them
are also satisfied when $(FG)^+$ is strongly Lie nilpotent.

\begin{teor}\label{teorliengelQ8+}
Let $F$ be a field of characteristic $p\neq2$, $G$ a group with a
nontrivial orientation $\sigma$ and $x,y$ elements of $G$ such that
$\left< x,y\right>\simeq Q_8$. Then $(FG)^{+}$ is strongly Lie
nilpotent if and only if either
\begin{enumerate}[(i)]
\item $char(F)=0$, $N\simeq Q_8\times E$ and $G\simeq \left<Q_8,g\right> \times E$, where  $E^2=1$ and $g\in
G\setminus N$ is such that $(g,x)=(g,y)=1$ and $g^2=x^2$; or,
\item $char(F)=p>2$, $N\simeq Q_8\times E\times P$, where $E^2=1$, $P$ is a finite $p$-group
 and there exists $g\in G\setminus N$ such that  $G\simeq
\left<Q_8,g\right>\times E\times P$, $(g,x)=(g,y)=1$ and $g^2=x^2$.
\end{enumerate}
 \end{teor}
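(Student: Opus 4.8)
The plan is to prove the two implications separately, with essentially all of the work lying in the "if" direction. For the "only if" direction, recall from the excerpt that strong Lie nilpotence implies Lie nilpotence (and $\t(S)\leq\tl(S)$); hence if $(FG)^{+}$ is strongly Lie nilpotent it is in particular Lie nilpotent, and the characterization of Lie nilpotence of the symmetric elements for groups containing a copy of $Q_8$ obtained in \cite{CP12} forces exactly the conditions (i) or (ii). It therefore remains to prove the converse: that each of (i) and (ii) makes $(FG)^{+}$ strongly Lie nilpotent.

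Assume (i) or (ii) and set $H:=\langle Q_8,g\rangle$, so that $G\simeq H\times E\times P$ (with $P=1$ in the characteristic $0$ case) and $N=Q_8\times E\times P$. Since $E,P\subseteq N$, the involution $*$ restricts to the classical involution on $FE$ and on $FP$ and to the oriented involution on $FH$, and $FG\simeq FH\otimes FE\otimes FP$ with $*$ the associated tensor involution. Because $E^{2}=1$ every element of $E$ is symmetric, so $(FE)^{+}=FE$ and $(FE)^{-}=0$; consequently $(FG)^{+}=(F[H\times P])^{+}\otimes FE$, and as $FE$ is central one checks that $((FG)^{+})^{(n)}=((F[H\times P])^{+})^{(n)}\otimes FE$ for all $n$. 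Thus $E$ contributes nothing and it suffices to treat $L:=H\times P$ and the subset $S_{0}:=(FL)^{+}$.

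First I would use that $x^{2}=g^{2}$ is central of order $2$ and $char(F)\neq 2$, so that $e_{\pm}=\tfrac12(1\pm x^{2})$ are central, symmetric, orthogonal idempotents with $e_{+}+e_{-}=1$. They split $FL=FLe_{+}\oplus FLe_{-}$ and, being central, split every term of the ideal chain, so that $S_{0}^{(n)}=(S_{0}e_{+})^{(n)}\oplus(S_{0}e_{-})^{(n)}$. On the $e_{+}$ block, $x^{2}$ acts as $1$, hence $FLe_{+}\simeq F[L/\langle x^{2}\rangle]$ with $L/\langle x^{2}\rangle\simeq(\mathbb{Z}_{2})^{3}\times P$; this group is nilpotent with commutator subgroup $P'$ a finite $p$-group, so by \cite{PPS73} and \cite{S78} the whole algebra $FLe_{+}$ is strongly Lie nilpotent, and therefore so is its subset $S_{0}e_{+}$.

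The main obstacle is the $e_{-}$ block, where $x^{2}$ acts as $-1$ and a genuine quaternion algebra appears: $FHe_{-}\simeq\mathbb{H}(F)\otimes_{F}K$ with $K=F[g]/(g^{2}+1)$ central, whence $FLe_{-}\simeq\mathbb{H}(F)\otimes K\otimes FP$. The structural point, which already underlies the argument in \cite{CP12}, is that the symmetric elements of $Q_8$ are exactly the class sums, so that $(FHe_{-})^{+}$ is the centre $K\cdot e_{-}$ of $FHe_{-}$ while $(FHe_{-})^{-}$ is the space of pure quaternions over $K$; consequently $S_{0}e_{-}=\bigl(K\cdot e_{-}\otimes(FP)^{+}\bigr)\oplus\bigl((\mathbb{H}(F)^{-}\otimes K)\otimes(FP)^{-}\bigr)$. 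In characteristic $0$ (where $P=1$) this reduces to $S_{0}e_{-}=K\cdot e_{-}$, which is central, so $(S_{0}e_{-})^{(2)}=0$ and one obtains $\tl((FG)^{+})=2$. In characteristic $p$ one argues by induction that $(S_{0}e_{-})^{(i)}\subseteq\mathbb{H}(F)\otimes K\otimes\omega(FP)^{\,i-1}$, where $\omega(FP)$ is the (nilpotent) augmentation ideal: in the base step every $[r,s]$ with $s\in S_{0}e_{-}$ has its $FP$-coordinate in $\omega(FP)$, because $(FP)^{-}\subseteq\omega(FP)$, $\omega(FP)$ is an ideal, and any Lie bracket lies in the augmentation ideal, while the $\mathbb{H}(F)$ and $K$ coordinates stay bounded; in the inductive step each further bracket against $S_{0}e_{-}$ pushes the $FP$-coordinate from $\omega(FP)^{\,i-1}$ into $\omega(FP)^{\,i}$, using $[\omega(FP)^{\,i-1},FP]\subseteq\omega(FP)^{\,i}$ and $\omega(FP)^{\,i-1}\cdot(FP)^{-}\subseteq\omega(FP)^{\,i}$. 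Since $P$ is a finite $p$-group, $\omega(FP)^{t}=0$ for some $t$, so $(S_{0}e_{-})^{(t+1)}=0$; Lemma \ref{lema2.1} can be invoked to package this step and to record the resulting numerical bound. Recombining the two blocks and reinstating $E$ then gives $((FG)^{+})^{(n)}=0$ for large $n$. The delicate point throughout is verifying that the noncommutative quaternion directions occurring in $S_{0}e_{-}$ appear only tensored with augmentation-zero elements of $FP$, since this is precisely what allows the powers of $\omega(FP)$ to climb and the induction to close.
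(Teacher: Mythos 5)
Your proposal is correct, but it takes a genuinely different route from the paper's proof. The ``only if'' halves coincide: both deduce Lie nilpotence from strong Lie nilpotence and cite the characterization in \cite{CP12}. For the converse, the paper argues by induction on $|P|=p^n$: it picks $z\in\zeta(P)$ of order $p$, applies the inductive hypothesis to $G/\left<z\right>$ to obtain $((FG)^+)^{(2p^{n-1})}\subseteq (z-1)FG$, and then invokes Lemma~\ref{lema2.1} with the central element $z-1$ to get $((FG)^+)^{(2p^{n})}\subseteq (z-1)^pFG=0$; the base case $P=1$ is exactly \cite[Lemma 4.3]{CP12}, which says $(FG)^+$ is commutative. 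You instead peel off $E$ by a tensor argument, split $F[\left<Q_8,g\right>\times P]$ by the central symmetric idempotents $e_{\pm}=\frac{1}{2}(1\pm x^2)$, dispose of the $e_+$ block by recognizing it as the full group algebra of $(\mathbb{Z}_2)^3\times P$, strongly Lie nilpotent by \cite{PPS73} and \cite{S78}, and on the $e_-$ block describe the symmetric elements explicitly and run an induction on powers of the augmentation ideal $\omega(FP)$. Your structural claims check out: the symmetric part of the $e_-$ block really is $Ke_-\otimes (FP)^+\oplus (\mathbb{H}^-\otimes K)\otimes (FP)^-$, and the commutator fact $[\omega(FP)^{i-1},FP]\subseteq \omega(FP)^{i}$ that your inductive step needs is a standard consequence of $(h,k)-1\in\omega(FP)^2$ together with the Leibniz rule, though it deserves a line of proof rather than a bare assertion. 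As for what each approach buys: the paper's proof is much shorter, leaning on \cite[Lemma 4.3]{CP12}, and yields the explicit bound $\tl((FG)^+)\leq 2p^n$; yours is more self-contained (it reproves that commutativity statement in passing on the two blocks) and yields a bound governed by $\tnil(P)$ and $\tl(FP)$ rather than by $|P|$, which is typically sharper --- indeed your $e_-$-block containment closely parallels Lemma~\ref{lema5Juhasz} ($((FG)^+)^{(n)}\subseteq FG\Delta(P)^n$), the tool the paper develops in Section 4 precisely to obtain its finer index estimates. One small repair: your closing appeal to Lemma~\ref{lema2.1} is misplaced, since $\mathbb{H}(F)\otimes K\otimes\omega(FP)$ is not of the form $zR$ for a single central element $z$; but the direct induction you describe never needs it.
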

 \begin{proof}
 If $(FG)^+$ is strongly Lie nilpotent, then $(FG)^+$ is Lie
 nilpotent and from \cite[Theorem~4.2]{CP12} we get (i) and (ii).

Conversely, assume that $|P|=p^n$. We claim that,
$((FG)^+)^{(2p^n)}=0$. The proof will be by induction on $n$. If
$n=0$, then $G\simeq \left<Q_8,g\right>\times E$ and thus, from \cite[Lemma 4.3]{CP12}, $(FG)^+$
is commutative. Assume that  $|P|=p^n>1$. Take $z\in \zeta(P)$ with
$o(z)=p$, applying our inductive hypothesis on
$\overline{G}=G/\left<z\right>$. Then,
$((F\overline{G})^+)^{(2p^{n-1})}=0$. Thus
$$((FG)^+)^{(2p^{n-1})}\subseteq \Delta(G,\left<z\right>)=(z-1)FG.$$
By Lema \ref{lema2.1},
$$((FG)^+)^{(2p^n)}\subseteq (z-1)^pFG=0,$$
as we claimed.

 \end{proof}
From the equality, $(x,y)=1+x^{-1}y^{-1}[x,y]$ we know that
$\gamma_n(\mathcal{U}^+(FG))\subseteq 1+((FG)^+)^{(n)}$ and thus we
get the following.

\begin{cor}
Let $F$ be a field of characteristic different from $2$. Assume that
$Q_8\subseteq G$ and $(FG)^+$ is Lie nilpotent. Then,
$\mathcal{U}^+(FG)$ is nilpotent.
\end{cor}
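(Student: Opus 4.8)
The plan is to obtain the statement as an immediate consequence of Theorem~\ref{teorliengelQ8+} together with the inclusion $\gamma_n(\mathcal{U}^+(FG))\subseteq 1+((FG)^+)^{(n)}$ recorded just above its statement. The entire content sits in one observation: once a copy of $Q_8$ lives inside $G$ (with the standing nontrivial orientation in force), Lie nilpotence and strong Lie nilpotence of $(FG)^+$ are the very same condition.

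First I would upgrade the hypothesis from Lie nilpotence to strong Lie nilpotence. Since $(FG)^+$ is Lie nilpotent and $Q_8\subseteq G$, \cite[Theorem~4.2]{CP12} guarantees that one of the arithmetic conditions (i) or (ii) of Theorem~\ref{teorliengelQ8+} holds. But these are exactly the conditions that Theorem~\ref{teorliengelQ8+} shows to be equivalent to strong Lie nilpotence of $(FG)^+$, so its sufficiency direction applies and yields an integer $n$ with $((FG)^+)^{(n)}=0$.

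Next I would feed this vanishing into the commutator inclusion. From $\gamma_n(\mathcal{U}^+(FG))\subseteq 1+((FG)^+)^{(n)}=1+0=\{1\}$ one reads off $\gamma_n(\mathcal{U}^+(FG))=1$, which is precisely the assertion that $\mathcal{U}^+(FG)$ is nilpotent; moreover \eqref{eq1} then supplies the quantitative refinement $\cl(\mathcal{U}^+(FG))<\tl((FG)^+)$.

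I do not anticipate a genuine obstacle, as the substantive work has already been carried out in Theorem~\ref{teorliengelQ8+}. The only point deserving care is the logical chain ``Lie nilpotent $\Rightarrow$ condition (i) or (ii) $\Rightarrow$ strongly Lie nilpotent'': the first implication is the characterization of \cite{CP12}, the second is the sufficiency half of Theorem~\ref{teorliengelQ8+}, and it is the coincidence of their hypotheses that collapses the two notions and thereby makes the corollary go through.
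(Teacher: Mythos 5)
Your proposal is correct and follows exactly the paper's intended argument: the remark preceding the corollary supplies the inclusion $\gamma_n(\mathcal{U}^+(FG))\subseteq 1+((FG)^+)^{(n)}$, and the corollary is then immediate because Theorem~\ref{teorliengelQ8+} (via the characterization of \cite{CP12}) shows that under the hypotheses Lie nilpotence of $(FG)^+$ already forces strong Lie nilpotence, so $((FG)^+)^{(n)}=0$ for some $n$. Nothing is missing; your chain of implications is precisely the one the paper relies on.
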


We need the following easy observation.

\begin{lema}\label{lema5.1.1}
Let $G$ be a group, $H$ any subgroup and $A$ a normal subgroup such
that $A\subseteq N$. If $(FG)^+$ is Lie nilpotent, then so are
$(FH)^+$ and $(F(G/A))^+$. Furthermore, $\t((FH)^+)\leq \t((FG)^+)$
and $\t((F(G/A))^+)\leq \t((FG)^+)$.
\end{lema}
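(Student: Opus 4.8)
The plan is to prove three separate statements for Lemma~\ref{lema5.1.1}: that Lie nilpotency of $(FG)^+$ passes to $(FH)^+$ (with an index bound) and to $(F(G/A))^+$ (with an index bound). The key structural observation is that in both cases we have a natural $F$-algebra map that is compatible with the involution $*$, and the involution restricts/descends correctly precisely because the hypotheses are arranged to respect $\sigma$ and hence the symmetric sets.

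\medskip

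For the subgroup case, first I would note that $FH$ embeds as an $F$-subalgebra of $FG$ via the inclusion $H\hookrightarrow G$, and that the oriented involution $*$ on $FG$ restricts to the oriented involution on $FH$ determined by $\sigma|_H$. The crucial point is that this restriction sends $(FH)^+$ into $(FG)^+$: an element of $FH$ fixed by $*|_{FH}$ is fixed by $*$ in $FG$, so $(FH)^+ = (FG)^+ \cap FH$. Since the Lie bracket in $FH$ is just the restriction of the Lie bracket in $FG$, any iterated Lie commutator of elements of $(FH)^+$ is the corresponding iterated commutator computed in $(FG)^+$. Therefore $\gamma^n((FH)^+) \subseteq \gamma^n((FG)^+)$ for every $n$, and if the right-hand side vanishes for $n = \t((FG)^+)$, so does the left. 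This gives both Lie nilpotency of $(FH)^+$ and the inequality $\t((FH)^+) \le \t((FG)^+)$ in one stroke.

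\medskip

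For the quotient case, I would use the canonical projection $\pi\colon FG \to F(G/A)$ induced by $g \mapsto gA$, which is a surjective $F$-algebra homomorphism. The key compatibility I must verify is that $\pi$ intertwines the involutions, i.e. $\pi(\alpha^*) = \pi(\alpha)^*$, where the involution on $F(G/A)$ is the oriented one for the orientation $\bar\sigma$ induced on $G/A$; this induced orientation is well defined exactly because $A \subseteq N = \ker\sigma$, so $\sigma$ factors through $G/A$. Granting this, $\pi$ maps $(FG)^+$ onto $(F(G/A))^+$: surjectivity of $\pi$ together with the intertwining shows every symmetric element of $F(G/A)$ is the image of a symmetric element (one can symmetrize a preimage using $\tfrac12(\beta+\beta^*)$, which is legitimate since $\operatorname{char}F \ne 2$). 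Because $\pi$ is an algebra homomorphism it commutes with Lie brackets, so $\pi(\gamma^n((FG)^+)) = \gamma^n((F(G/A))^+)$; hence $\gamma^n((FG)^+) = 0$ forces $\gamma^n((F(G/A))^+) = 0$, giving the nilpotency and the bound $\t((F(G/A))^+) \le \t((FG)^+)$.

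\medskip

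I expect the main obstacle to be the careful handling of the induced orientation and the surjectivity onto symmetric elements in the quotient. The hypothesis $A \subseteq N$ is doing real work: without it $\sigma$ would not descend and the projection would fail to intertwine the involutions, so I must state explicitly that $\bar\sigma(gA) = \sigma(g)$ is well defined and that $\pi(g^{-1}) = (gA)^{-1}$, making $\pi(\alpha^*) = \pi(\alpha)^*$ a term-by-term verification on the basis $G$. The subgroup half is essentially automatic once the restriction of $*$ is identified, so the quotient half is where I would spend the care; everything else reduces to the elementary fact that algebra homomorphisms (and subalgebra inclusions) are Lie-bracket compatible, which immediately propagates to the lower Lie central series and yields the claimed index inequalities.
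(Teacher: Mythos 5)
Your proposal is correct and follows essentially the same route as the paper: the subgroup case via the containment $(FH)^+\subseteq (FG)^+$, and the quotient case via the natural projection $\varepsilon_A\colon FG\to F(G/A)$, which intertwines the involutions (well defined precisely because $A\subseteq N$), maps $(FG)^+$ onto $(F(G/A))^+$, and commutes with iterated Lie brackets. The only cosmetic difference is how surjectivity onto $(F(G/A))^+$ is obtained --- you symmetrize a preimage via $\tfrac{1}{2}(\beta+\beta^*)$ using $char(F)\neq 2$, while the paper exhibits the spanning set $\{gA+\sigma(g)g^{-1}A\}$ as images of elements of $(FG)^+$ --- and both are immediate.
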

\begin{proof}
Note that $(FH)^+$ is a subset of $(FG)^+$, and thus it has the
required properties.

Since $A$ is a normal subgroup contained in the kernel of the
orientation $\sigma$, we can define in $F(G/A)$ an induced oriented classical
involution from $*$ in $FG$ as follows:
$$\left(\sum_{\bar{g}\in G/A}\alpha_g\bar{g}\right)^{\star}= \sum_{\bar{g}\in G/A}\alpha_g\sigma(g)\bar{g}^{-1}.$$

Now, simply observe that the symmetric elements in $F(G/A)$, under
$\star$, are linear combinations  of terms of the form
$gA+\sigma(g)g^{-1}A$, with $g\in G$. That is, every element of
$(F(G/A))^+$ is the homomorphic image of an element of $(FG)^+$
under the natural map $\varepsilon_A:FG\rightarrow F(G/A)$, defined by $\varepsilon_A(\sum_{g\in G} \alpha_g g)=\sum_{g\in G} \alpha_g \bar{g}$.

So assume that $(FG)^+$ is Lie nilpotent, therefore there exists
$n=\t((FG)^+)$ such that  $[\alpha_1,\ldots,\alpha_n]=0$ for all
$\alpha_i\in (FG)^+$. Let $\beta_1,\ldots,\beta_n\in (F(G/A))^+$.
Thus
\begin{align*}
[\beta_1,\ldots,\beta_n]&=[\varepsilon_A(\alpha_1),\ldots,\varepsilon_A(\alpha_n)]\\
&=\varepsilon_A([\alpha_1,\ldots,\alpha_n])=\varepsilon_A(0)=0.
\end{align*}
Consequently, $\t((F(G/A))^+)\leq \t((FG)^+)$.
\end{proof}

\section{Lie nilpotent group algebras}

In this section we assume that $FG$ is Lie nilpotent. By
\cite{SHB92}, $\tl(FG)\leq |G'|+1$ and by \cite{BS04} the equality
holds if and only if $G'$ is cyclic, or $G'$ is a noncentral
elementary abelian group of order $4$.

Note that a group $G$ of odd finite order has trivial orientation.
Indeed, let $a$ be an element of $G$. So
$1=\sigma(a^{|G|})=\sigma(a)^{|G|}$ and as $|G|$ is odd we get that
$\sigma(a)=1$. For the last reason when $G$ is a group of odd finite
order, the involution $*$ is the classical involution. In this way,
we can use the following result, that is a combination from \cite[Lemma 2]{BJ11} and \cite[Lemma 2]{BJ12}.

\begin{lema}\label{lema2Juhasz}
Let $G$ be a finite $p$-group with a cyclic derived subgroup. Then
$\t((FG)^+)\geq |G'|+1$ and $\cl(\mathcal{U}^+(FG))\geq |G'|$.
\end{lema}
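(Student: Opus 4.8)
The plan is to reduce to the classical involution, where both inequalities are already recorded in the literature, and then to quote them. If $G'=1$ the two bounds read $\t((FG)^+)\geq 2$ and $\cl(\mathcal{U}^+(FG))\geq 1$, which hold because $(FG)^+$ is then a nonzero commutative set and $\mathcal{U}^+(FG)$ an abelian group; so assume $G'\neq 1$. Since $FG$ is Lie nilpotent, by \cite{PPS73} the finite group $G'$ is a $char(F)$-group, and as $G$ is a $p$-group this forces $p=char(F)$, which is odd because $char(F)\neq 2$. Hence $|G|=p^k$ is odd, and by the observation preceding the statement the orientation $\sigma$ is trivial; thus $*$ coincides with the classical involution on $FG$, and $(FG)^+$, $\mathcal{U}^+(FG)$ are exactly the symmetric elements and symmetric units of $FG$ under the classical involution.

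With this identification the two bounds are no longer about oriented involutions at all. The inequality $\t((FG)^+)\geq |G'|+1$ is then \cite[Lemma~2]{BJ11}, and $\cl(\mathcal{U}^+(FG))\geq |G'|$ is \cite[Lemma~2]{BJ12}, both proved for a finite $p$-group with cyclic derived subgroup under the classical involution. So, once triviality of $\sigma$ is in hand, the lemma follows by citation.

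For completeness I would indicate where the actual content of those two cited results sits, since that is the only nontrivial part. Writing $G'=\left<a\right>$ with $o(a)=|G'|$, one fixes $g,h\in G$ with $(g,h)=a$ and assembles symmetric elements from $g+g^{-1}$, $h+h^{-1}$ and their conjugates; the crux is to produce a Lie commutator of length $|G'|$ (respectively a group commutator of symmetric units of weight $|G'|$) that survives, i.e. has a nonzero coefficient on some group element. This is carried out by tracking coefficients along the powers of the relative augmentation ideal $\Delta(G,G')$ and using that $G'$ is cyclic of the stated order, which forces $\gamma^{|G'|}((FG)^+)\neq 0$ and $\gamma_{|G'|}(\mathcal{U}^+(FG))\neq 1$.

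The main obstacle here is therefore deliberately modest: the entire reduction hinges on the orientation being trivial, which is precisely what licenses the appeal to results established only for the classical involution. The genuinely hard work is internal to \cite{BJ11} and \cite{BJ12}, namely the explicit construction of symmetric elements realizing the extremal commutator length together with the coefficient bookkeeping in the filtration of $\Delta(G,G')$; I would reproduce that only if the citation were disallowed.
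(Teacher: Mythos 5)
Your proposal is correct and follows essentially the same route as the paper: the paper gives no independent proof of this lemma, but justifies it exactly as you do, by the preceding observation that a finite group of odd order admits only the trivial orientation (so $*$ is the classical involution there) and then quoting Lemma~2 of \cite{BJ11} together with Lemma~2 of \cite{BJ12}. Your extra steps (handling $G'=1$ and deriving $p=char(F)$ from the section's standing assumption that $FG$ is Lie nilpotent via \cite{PPS73}) are harmless refinements of the same reduction-plus-citation argument.
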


We recall that a group $G$ is called $p$-abelian if $G'$, the
commutator subgroup of $G$, is a finite $p$-group and $0$-abelian
means abelian.

\begin{teor}
Let $FG$ be a Lie nilpotent group algebra of odd characteristic and
nontrivial orientation. Then, $\t((FG)^+)=|G'|+1$ if and only if
$G'$ is cyclic. Moreover, assuming that $G$ is a torsion group,
$\cl(\mathcal{U}^+(FG))=|G'|$ if and only if $G'$ is cyclic.
\end{teor}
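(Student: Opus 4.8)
The plan is to pin both quantities between matching bounds. For the upper bounds I would first record that, as $(FG)^{+}\subseteq FG$, one has $\gamma^{n}((FG)^{+})\subseteq\gamma^{n}(FG)$, whence $\t((FG)^{+})\le\t(FG)\le\tl(FG)$. Since $FG$ is Lie nilpotent, $\tl(FG)\le|G'|+1$ by \cite{SHB92}, and by \cite{BS04} equality holds exactly when $G'$ is cyclic or $G'$ is a noncentral elementary abelian group of order $4$. The second possibility forces $p=2$ and is therefore vacuous in our odd characteristic, so in our setting $\tl(FG)=|G'|+1$ if and only if $G'$ is cyclic. This already gives the forward implication: if $\t((FG)^{+})=|G'|+1$ then $\tl(FG)=|G'|+1$, hence $G'$ is cyclic. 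For the units, inequality \eqref{eq1} applied to $S=(FG)^{+}$ yields $\cl(\mathcal{U}^{+}(FG))<\tl((FG)^{+})\le\tl(FG)\le|G'|+1$, so $\cl(\mathcal{U}^{+}(FG))\le|G'|$; and if this is an equality then $\tl((FG)^{+})=|G'|+1$, forcing $\tl(FG)=|G'|+1$ and again $G'$ cyclic. Thus both forward implications are immediate, and it remains to produce the lower bounds when $G'$ is cyclic.

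The key reduction I would establish is that, under our hypotheses, $N'=G'$, which moves the whole problem to the classical involution on $FN$. Indeed, fix $t\in G\setminus N$ and let $\phi$ be conjugation by $t$ on the normal subgroup $N$. Since $t^{2}\in N$, the automorphism $\phi^{2}$ is inner on $N$, so the induced map $\bar\phi$ on the abelianization $N/N'$ is an involution. A direct computation then shows $G'/N'=\operatorname{im}(\bar\phi-1)$ and that $\bar\phi$ acts as $-1$ on this image; as $t$ thus inverts the $p$-group $G'/N'$ and $p$ is odd, $[G'/N',\langle t\rangle]=G'/N'$. Were $G'\neq N'$ this would contradict the nilpotency of $G$ (equivalently of $G/N'$), so $N'=G'$. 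This is the step where both the orientation and the odd characteristic genuinely enter, and I expect it to be the main obstacle.

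For the lower bound on $\t((FG)^{+})$ I would now work entirely inside $FN$, where $*$ restricts to the classical involution. Since $G'=N'$ is cyclic, choose $x,y\in N$ with $(x,y)$ generating $N'$ and set $H=\langle x,y\rangle\le N$, a finitely generated nilpotent group with $H'=G'$. Discarding the (characteristic) $p'$-torsion of $H$ does not change $H'$, and the resulting group is residually a finite $p$-group; hence there is a normal subgroup $B\trianglelefteq H$ with $B\cap G'=1$ and $Q:=H/B$ a finite $p$-group, so that $Q'\cong G'$ is cyclic of order $|G'|$. As $Q$ has odd order, its oriented involution is classical and Lemma \ref{lema2Juhasz} gives $\t((FQ)^{+})\ge|G'|+1$. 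Since $H\le N\le G$ and $B\subseteq H\subseteq\ker\sigma$, two applications of Lemma \ref{lema5.1.1} (the subgroup version for $H\le G$ and the quotient version for $H\twoheadrightarrow Q$, taking $H$ itself as the ambient group with its trivial orientation) yield $\t((FG)^{+})\ge\t((FH)^{+})\ge\t((FQ)^{+})\ge|G'|+1$. Combined with the upper bound this gives $\t((FG)^{+})=|G'|+1$.

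Finally, for the nilpotency class of the symmetric units I would run the same reduction, now using the torsion hypothesis: when $G$ is a torsion group the group $H=\langle x,y\rangle$ is finitely generated, nilpotent and torsion, hence finite, so I may take directly a finite $p$-section $Q$ of $H$ with $Q'=G'$ without any residual argument. Lemma \ref{lema2Juhasz} then gives $\cl(\mathcal{U}^{+}(FQ))\ge|G'|$, and the torsion hypothesis also guarantees that the relevant symmetric units lift along $FH\to FQ$ (the kernel is a $p$-group, so $1+\Delta(H,B)$ consists of units), so that $\cl(\mathcal{U}^{+}(FQ))\le\cl(\mathcal{U}^{+}(FH))\le\cl(\mathcal{U}^{+}(FG))$. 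With the upper bound $\cl(\mathcal{U}^{+}(FG))\le|G'|$ this yields equality, completing the proof.
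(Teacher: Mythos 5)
Your proposal is correct in its main thrust, but it takes a genuinely different route from the paper in the converse (lower-bound) direction, and one justification in your units argument is wrong as stated (though harmlessly so).

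The paper proves the converse by applying \cite[Lemma 1]{BJ08} directly to $G$: it produces a finite $p$-group $P\simeq H/A$ with $P'\simeq G'$, where $A$ is a maximal torsion-free \emph{central} subgroup of $G$, and then splits into two cases according to whether $A\subseteq N$ or not. If some $g\in A$ has $\sigma(g)=-1$, centrality of $g$ gives $G'=N'$ and the problem moves to the classical involution on $FN$; if instead $A\subseteq N$, the oriented involution is transported to $F(H/A)$, where it is automatically classical because the section has odd order. You replace this case analysis by the unconditional structural claim $N'=G'$, proved via the inversion argument: $t\in G\setminus N$ induces an automorphism of order at most $2$ on $N/N'$, the subgroup $G'/N'=\operatorname{im}(\bar\phi-1)$ is inverted by $t$, hence (as $p$ is odd) $[G'/N',\langle t\rangle]=G'/N'$, which contradicts nilpotency of $G/N'$ unless $G'=N'$. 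This argument is correct and arguably cleaner, since it uniformizes everything to the classical involution on $FN$; the price is that you then reprove the content of \cite[Lemma 1]{BJ08} by hand (a single commutator generates the cyclic $p$-group $G'$, pass to $H=\langle x,y\rangle$, kill the $p'$-torsion, and invoke Gruenberg's theorem that a finitely generated nilpotent group whose torsion is a $p$-group is residually a finite $p$-group). From that point on both proofs coincide: Lemma \ref{lema2Juhasz} gives the lower bound on the finite $p$-group section and Lemma \ref{lema5.1.1} lifts it to $\t((FG)^+)$; your upper bounds and both forward implications are the same as the paper's, resting on \cite{SHB92}, \cite{BS04} and inequality \eqref{eq1}.

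The slip: in the torsion case you justify $\cl(\mathcal{U}^+(FQ))\le\cl(\mathcal{U}^+(FH))$ by claiming the kernel of $H\to Q$ is a $p$-group, so that $1+\Delta(H,B)$ consists of units. That is false. Since $Q=H/B$ is a finite $p$-group, $B$ must contain the entire $p'$-part of the finite nilpotent group $H$, so $B$ is (essentially) a $p'$-group; and then $1+\Delta(H,B)$ contains non-units, for instance the idempotent $|B|^{-1}\widehat{B}=1+\bigl(|B|^{-1}\widehat{B}-1\bigr)$. The fix is immediate and needs no lifting at all: $H=H_p\times H_{p'}$, so you may take $Q=H_p$ as a subgroup (a direct factor) of $H$, whence $\mathcal{U}^+(FQ)\subseteq\mathcal{U}^+(FH)\subseteq\mathcal{U}^+(FG)$ and the inequality of nilpotency classes is trivial. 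This is in effect what the paper's torsion hypothesis accomplishes as well: for torsion $G$ the maximal torsion-free central subgroup $A$ is trivial, so its $p$-group $P$ is an honest subgroup of $G$ and its symmetric units sit inside $\mathcal{U}^+(FG)$ directly.
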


\begin{proof} Assume that $\t((FG)^+)=|G'|+1$. As $G'$ is a finite $p$-group, if $G'$ is not
cyclic, from \cite{BS04}, we know that $t((FG)^+)\leq
\tl(FG)<|G'|+1$ and we get a contradiction. Thus, $G'$ is cyclic.

Conversely, suppose that $G'$ is cyclic. By the hypotheses, $G$ is a
nilpotent $p$-abelian group and from \cite[Lemma 1]{BJ08} there
exists a finite $p$-group $P$ which is isomorphic to a subgroup of
factor group of $G$ and $P'\simeq G'$. Actually, from the proof of
\cite[Lemma 1]{BJ08}, we know that $P\simeq H/A$, where $A$ is a
maximal torsion-free central subgroup of $G$.

Assume that there exists $g\in A$ such that $\sigma(g)=-1$. In this
way, as $G=N\cup gN$, we get $G'=N'$. Using in $FP$ the classical
involution, by lemmas \ref{lema2Juhasz} and \ref{lema5.1.1}, we obtain
that
$$|G'|+1=|N'|+1=|P'|+1\leq \t((FP)^+)\leq \t((FN)^+)\leq \t((FG)^+).$$
In the other hand, suppose that $A\subseteq N$. Then we can define
an induced oriented classical involution in $P\simeq H/A$, from that
one in $FG$. Consequently,
$$|G'|+1=|P'|+1\leq \t((FP)^+)\leq \t((FG)^+).$$

The proof of the second part is similar.
\end{proof}

\section{Groups that contain a copy of $Q_8$}
We assume that $Q_8\subseteq G$ and $(FG)^+$ is Lie nilpotent. This
means that the group algebra $FG$ is not Lie nilpotent. Recently,
this kind of group algebras was characterized by Castillo and
Polcino Milies \cite{CP12}. This characterization is the same as in
Theorem \ref{teorliengelQ8+}, so during this section we assume that
$G$ is as in that result. In this section, we will study the Lie
nilpotency index of the symmetric elements under oriented classical
involutions.

It is easy to show that
\begin{equation}\label{eq5.5}
g^m-1\equiv m(g-1)\pmod {\Delta(G)^2}.
\end{equation}
for every $g\in G$ and any integer $m$.

We begin with the following result.
\begin{lema}\label{lema5Juhasz}
Consider $FG$ with an oriented classical involution. Then
$$((FG)^+)^{(n)}\subseteq FG\Delta(P)^n$$ for all $n\geq 2$
\end{lema}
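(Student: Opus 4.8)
The plan is to induct on $n$, the engine being that symmetric elements become \emph{central} (not merely commutative) once we pass to $G/P$, together with the fact that $\Delta(P)$ is a central ideal. Set up the structure first: since $G\simeq\left<Q_8,g\right>\times E\times P$, the subgroup $P$ is central and a direct factor, so writing $G_0=\left<Q_8,g\right>\times E$ we have $G=G_0\times P$ and $FG\cong FG_0\otimes_F FP$; as $P\subseteq N$, under this identification $*$ is the tensor product of the oriented involution of $FG_0$ with the classical involution $p\mapsto p^{-1}$ of $FP$. Because $P$ is central, $\Delta(P)$ lies in the centre of $FG$, the ideal $I:=FG\Delta(P)=\Delta(G,P)$ is two-sided, and $I^{n}=FG\Delta(P)^{n}$ for all $n$; moreover $\varepsilon_P((FG)^+)=(F(G/P))^+=(FG_0)^+$ by the argument in Lemma \ref{lema5.1.1}. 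The key point I would establish is that $(FG_0)^+$ is central in $FG_0$; equivalently, $[\alpha,\beta]\in I$ for all $\alpha\in FG$ and $\beta\in(FG)^+$.

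To prove the key point it suffices, by the description of $\mathcal S$, to check that the module generators $h+h^{-1}$ $(h\in N_0=\left<x,y\right>\times E)$ and $w-w^{-1}$ $(w\in G_0\setminus N_0,\ w^2\neq1)$ are central in $FG_0$. Since $E$ is central and, for $w\in gN_0$, the condition $w^2\neq1$ forces the $Q_8$-component of $w$ into $\left<x^2\right>$, the generators of the second type and the $h+h^{-1}$ with $h$ of order at most $2$ are products of central elements. The only remaining case is $h+h^{-1}=h(1+x^2)$ with $h\in Q_8$ of order $4$. For any $w\in G_0$ the commutator $[w,h]$ is either $0$ or equals $(x^2-1)hw$, and since $(x^2-1)(x^2+1)=x^4-1=0$ we obtain $[w,h(1+x^2)]=[w,h](1+x^2)=0$. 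Hence every generator is central.

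For the base case $n=2$ I would use the eigenspace decomposition $(FG)^+=\big((FG_0)^+\otimes(FP)^+\big)\oplus\big((FG_0)^-\otimes(FP)^-\big)$. Arguing as in \eqref{eq5.5}, modulo $\Delta(P)^2$ one has $p+p^{-1}\equiv2$ and $p-p^{-1}\equiv2(p-1)$, so $(FP)^+$ reduces to scalars and $(FP)^-\subseteq\Delta(P)$. Thus every $\beta\in(FG)^+$ can be written, modulo $I^2$, as $\beta\equiv\beta_0+\gamma$ with $\beta_0\in(FG_0)^+$ central in $FG$ and $\gamma\in(FG_0)^-\Delta(P)\subseteq I$. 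For $\beta,\beta'\in(FG)^+$ the terms involving $\beta_0,\beta_0'$ then drop out, leaving $[\beta,\beta']\equiv[\gamma,\gamma']\pmod{I^2}$; and since $\Delta(P)$ is central, $[\gamma,\gamma']$ is a sum of terms $[c,c']\,vv'$ with $v,v'\in\Delta(P)$, whence it lies in $FG\Delta(P)^2=I^2$. Therefore $((FG)^+)^{(2)}\subseteq I^2$.

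For the inductive step, assume $((FG)^+)^{(n)}\subseteq I^n$. As $I^{n+1}$ is an ideal, it is enough to place each generator $[\xi,\beta]$ of $((FG)^+)^{(n+1)}$, with $\xi\in((FG)^+)^{(n)}$ and $\beta\in(FG)^+$, inside $I^{n+1}$. Writing $\xi=\sum_l\eta_l v_l$ with $\eta_l\in FG$ and $v_l\in\Delta(P)^n$ central, the derivation identity gives $[\xi,\beta]=\sum_l[\eta_l,\beta]\,v_l$, and the key point yields $[\eta_l,\beta]\in I$, so $[\xi,\beta]\in I\,\Delta(P)^n=I^{n+1}$; this closes the induction. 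The main obstacle is exactly the key point: \cite{CP12} records only that $(FG_0)^+$ is commutative, whereas both the base case and the induction genuinely need the stronger fact that $\beta_0$ commutes with all of $FG_0$, not just with $(FG_0)^+$. The one other delicate step is extracting the \emph{second} power of $\Delta(P)$ in the base case, which rests on the quadratic occurrence of $(FP)^-$ in the odd–odd summand.
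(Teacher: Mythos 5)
Your ``key point'' is correct, and it is a genuinely nice strengthening of what the paper cites from \cite{CP12}: writing the problematic generators as $h+h^{-1}=h(1+x^2)$ and using $(1-x^2)(1+x^2)=0$ does show that $(FG_0)^+$ is \emph{central} in $FG_0$, not merely commutative; your eigenspace decomposition of $(FG)^+$ then recovers, in tensor-product language, exactly the paper's containment \eqref{contencion1}, i.e.\ $(FG)^+\subseteq FG\Delta(P)+\zeta(FG)$. The genuine gap is the assumption on which you build the induction: you assert that $P$ is central in $G$, hence that $\Delta(P)$ (and each $v_l\in\Delta(P)^n$) is central in $FG$. Being a direct factor does not make $P$ central: by Theorem \ref{teorliengelQ8+}, $P$ is an \emph{arbitrary} finite $p$-group, and the theorem that follows this lemma treats ``$P$ powerful'' and ``$P$ abelian'' as special cases in its items (iii) and (iv), so nonabelian $P$ is very much allowed. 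Elements of $P$ commute with $G_0$, but not with one another; $\Delta(P)$ is central in $FG$ if and only if $P$ is abelian.

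This false assumption is load-bearing precisely in your inductive step: the identity $[\eta_l v_l,\beta]=[\eta_l,\beta]v_l$ needs $v_l$ to commute with $\beta$, and $\beta\in(FG)^+$ has nontrivial $P$-components. For nonabelian $P$ there is an extra term $\eta_l[v_l,\beta]$, and to put it into $FG\Delta(P)^{n+1}$ you need the separate fact that $[\Delta(P)^n,FP]\subseteq\Delta(P)^{n+1}$; this is true, via the identity $(h,k)-1=h^{-1}k^{-1}\bigl((h-1)(k-1)-(k-1)(h-1)\bigr)\in\Delta(P)^2$ (the paper's \eqref{eq5.25}) combined with the derivation property of the bracket, but you neither state nor prove it. (The same slip occurs harmlessly in your base case: $[\gamma,\gamma']$ is a sum of terms $c_ic_j'v_iv_j'-c_j'c_iv_j'v_i$ rather than $[c_i,c_j']v_iv_j'$, but each monomial separately lies in $FG\Delta(P)^2$, so that conclusion survives.) The paper sidesteps all of this by never invoking a derivation identity: from \eqref{contencion1} it concludes $[FG\Delta(P)^n,FG\Delta(P)]\subseteq FG\Delta(P)^{n+1}$ simply because both products $FG\Delta(P)^n\cdot FG\Delta(P)$ and $FG\Delta(P)\cdot FG\Delta(P)^n$ land in $FG\Delta(P)^{n+1}$, using only that each $\Delta(P)^k$ is an ideal of $FP$ commuting elementwise with $FG_0$. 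So your architecture is salvageable --- either replace the derivation step by this product argument, or insert and prove the degree-gaining property of commutators in $FP$ --- but as written your proof establishes the lemma only for abelian $P$.
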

\begin{proof}

Recall that the symmetric elements are spanned as an $F$-module by
the set $$\mathcal{S}=\{z+z^{-1}: z\in N\}\cup \{z-z^{-1}:z\in
G\setminus N\}.$$ If $z\in N$, then $z=ah$ with $a\in Q_8\times E$
and $h\in P$. Note that if $a^2h=1$, then $h=1$ and $a^2=1$. Thus,
$a\in \zeta(Q_8\times E)$. Assuming $a^2h\neq 1$, follows that
$z+z^{-1}=ah+a^{-1}h^{-1}=ah+a^3h^{-1}=a(h+a^2h^{-1})$.

Also, if $z\in G\setminus N$; we can write $z=gah$ with $a\in
Q_8\times E$ and $h\in P$. If $a^2h=1$, then $a^2=h=1$. Again, $a\in
\zeta(Q_8\times E)$ and thus
$z-z^{-1}=gah-g^{-1}a^{-1}h^{-1}=ga-g^{-1}a=ga(1-g^2)\in
\zeta(Q_8\times E)$. Now we suppose that $a^2h\neq 1$ and we get the
following cases:
\begin{enumerate}
\item If $a^2=1$ and $h\neq 1$, then
$z-z^{-1}=gah-g^{-1}a^{-1}h^{-1}=ag(h-g^2h^{-1})$.
\item If $a^2\neq 1$ and $h=1$, then
$z-z^{-1}=gah-g^{-1}a^{-1}h^{-1}=ga-g^3a^3=ga-ga=0$.
\item If $a^2\neq 1$ and $h\neq 1$, then $z-z^{-1}=gah-g^{-1}a^{-1}h^{-1}=agh-a^3g^3h^{-1}=ag(h-h^{-1})$,
because $a^3g^3=ag$.
\end{enumerate}

 From the above considerations, we obtain that
\begin{align*}
 \mathcal{S}=\mathcal{A}\cup \mathcal{B}\cup\mathcal{C} \cup\zeta(Q_8\times
 E),
 \end{align*}
where
\begin{align*}
\mathcal{A}&=\{a(h+a^2h^{-1}): a\in Q_8\times E, h\in P \text{ and }
a^2h\neq 1 \},\\
\mathcal{B}&=\{ag(h-g^2h^{-1}):a\in Q_8\times E , h\in P \text{ and
}
(a^2=1 \text{ and } h\neq 1) \},\\
\mathcal{C}&=\{ag(h-h^{-1}):a\in Q_8\times E , h\in P \text{ and }
(a^2\neq 1 \text{ and } h\neq 1)\}.
\end{align*}

Given $a\in Q_8\times E$, such that $a^2\neq 1$ we know that $1+a^2$
is symmetric and $a^2\in \zeta(Q_8\times E)$. In this way,
$$a(h+a^2h^{-1})+1+a^2=a(h-1)+a^3(h^{-1}-1)+1+a+a^2+a^3,$$
where $1+a+a^2+a^3$ is a central element in $FG$ and
$a(h-1)+a^3(h^{-1}-1)\in FG\Delta(P)$. It is clear that,
$ag(h-h^{-1})\in FG\Delta(P)$. Furthermore, if $a^2=1$ and $h\neq
1$, then $ag(h-g^2h^{-1})=ag(h-1)-ag^3(h^{-1}-1)+a(g-g^{-1})\in
FG\Delta(P)+\zeta(FG)$.

So
\begin{align*}
 \mathcal{\widetilde{S}}=\mathcal{A}'\cup \mathcal{B}\cup \mathcal{C} \cup\zeta(Q_8\times
 E),
\end{align*}
also spans $(FG)^+$ as an $F$-module, where
$$\mathcal{A}'=\{a(h+a^2h^{-1})+1+a^2: a\in Q_8\times E, h\in P
\text{ and } a^2h\neq 1 \}$$  and $\mathcal{B}, \mathcal{C}$ are as
above.

In consequence,
\begin{equation}\label{contencion1} (FG)^+\subseteq
FG\Delta(P)+\zeta(FG).
\end{equation}
The proof follows by induction on $n$. Indeed, if $n=2$
\begin{align*}
[(FG)^+,(FG)^+]\subseteq [FG\Delta(P),FG\Delta(P)]\subseteq
FG\Delta(P)^2.
\end{align*}
Suppose that the lemma is true for some $n\geq 2$. Take $\alpha\in
((FG)^+)^{(n)}$ and $\beta \in (FG)^+$. So
\begin{align*}
[\alpha,\beta] \in [FG\Delta(P)^n,FG\Delta(P)]\subseteq
FG\Delta(P)^{n+1}.
\end{align*}
and we get that $((FG)^+)^{(n+1)}\subseteq FG\Delta(P)^{n+1}$ as
required.
\end{proof}

Denote by $c$ the central element of $Q_8\times E$, such that
$(Q_8\times E)^2=\left<c\right>$. Given $n\geq 2$, we denote with
$M_n$ the $F$-subspace of the vector space $FG$  generates by the
set
$$\{(h_1-h_1^{-1})\cdots (h_n-h_n^{-1})(1-c)a:h_1,\ldots,h_n\in P, a\in (Q_8\times E)\setminus \zeta(Q_8\times E)\}.$$
To simplify, we write $f_{1,\ldots,n}$ instead of
$(h_1-h_1^{-1})\cdots (h_n-h_n^{-1})$.

Let $S_n$ be the symmetric group of degree $n$ and $FS_n$ its group
algebra over the field $F$. It is possible to define a group action
of $S_n$ on $M_n$ via: for a $\sigma\in S_n$ and a generator element
$f_{1,\ldots,n}(1-c)a$ of $M_n$ let
$$
\sigma\cdot
f_{1,\ldots,n}(1-c)a=f_{\sigma(1),\ldots,\sigma(n)}(1-c)a.$$
Naturally, this group action on a generator set of $M_n$ can be
extended linearly to the whole $M_n$. We extend this group action to
a group algebra action: for $x=\sum_{\sigma\in
S_n}\alpha_{\sigma}\sigma \in FS_n$ and $z\in M_n$, let
$$x\cdot z=\sum_{\sigma\in S_n}\alpha_{\sigma}(\sigma\cdot z).$$
For $n\geq 2$ we define the elements
$x_{2,n},x_{3,n},\ldots,x_{n,n}$ of $FS_n$ recursively as:
\begin{align}
&x_{2,n}=1+(2,1),\\
&x_{i,n}=x_{i-1,n}+x_{i-1,n}(i,i-1,\ldots,1); \text{ for $3\leq
i\leq n$.}
\end{align}

Since $(FN)^+\subseteq (FG)^+$, from Lemma 4 and Lemma 5 in
\cite{BJ12}, we get the following results.
\begin{lema}
$x_{n,n}M_n\in \gamma^n((FG)^+)(1-c)$ for all $n\geq 2$.
\end{lema}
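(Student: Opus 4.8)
The plan is to prove this by induction on $n$, showing that the element $x_{n,n}M_n$ lies in $\gamma^n((FG)^+)(1-c)$ by building up the Lie commutators of symmetric elements from the structure of $M_n$. First I would note that the generators of $M_n$ are built from factors $h_i - h_i^{-1}$ with $h_i \in P$, multiplied by $(1-c)a$ with $a \in (Q_8 \times E) \setminus \zeta(Q_8 \times E)$. The key observation is that elements of the form $a(h - h^{-1})$ (these are precisely the elements of $\mathcal{C}$ from Lemma~\ref{lema5Juhasz} when $a^2 \neq 1$), together with the central symmetric element $1+c = 1+a^2$, are exactly the building blocks available inside $(FG)^+$. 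So the strategy is to express each factor as a Lie commutator, or a sum of Lie commutators, of symmetric elements, and to track how the symmetrization operators $x_{i,n}$ arise from the recursive nesting.

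The core computation is the base case $n=2$. Here $x_{2,2} = 1 + (2,1)$, so $x_{2,2} \cdot f_{1,2}(1-c)a = (h_1-h_1^{-1})(h_2-h_2^{-1})(1-c)a + (h_2-h_2^{-1})(h_1-h_1^{-1})(1-c)a$. I would want to show this symmetrized product is, up to the factor $(1-c)$, a single Lie commutator $[\alpha,\beta]$ of two symmetric elements. The natural candidates are $\alpha = a(h_1 - h_1^{-1})$ and $\beta$ built from $h_2$; the reason the symmetrizer $1+(2,1)$ appears is that when one expands the Lie bracket $[\alpha,\beta] = \alpha\beta - \beta\alpha$, the noncommutativity of $a$ with the $P$-factors and the sign introduced by $c = a^2$ combine so that cross terms either cancel or reinforce to produce exactly the symmetric sum, multiplied by $(1-c)$. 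This is where the hypothesis that $a \notin \zeta(Q_8 \times E)$ matters: it guarantees $a$ anticommutes appropriately so that conjugating $h - h^{-1}$ by $a$ flips signs in the controlled way needed to extract the factor $(1-c)$.

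For the inductive step, assuming $x_{n,n}M_n \subseteq \gamma^n((FG)^+)(1-c)$, I would take a generator $f_{1,\dots,n+1}(1-c)a$ of $M_{n+1}$ and analyze the recursion $x_{n+1,n+1} = x_{n,n+1} + x_{n,n+1}(n+1,n,\dots,1)$, relating $x_{n,n+1}$ acting on the first $n$ factors to the inductive hypothesis, and then forming one further Lie commutator with a symmetric element built from $h_{n+1}$. The bracket $[\gamma^n((FG)^+), (FG)^+] = \gamma^{n+1}((FG)^+)$ gives the outer layer, and the recursive definition of $x_{i,n}$ is designed precisely to match the combinatorics of which permutations survive when one expands this outer commutator. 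The main obstacle will be the bookkeeping of the permutation-algebra action: verifying that the additional permutation $(n+1,n,\dots,1)$ together with the existing symmetrizer $x_{n,n+1}$ reproduces $x_{n+1,n+1}$ acting on $M_{n+1}$, while the central factor $(1-c)$ is preserved and no extraneous terms (coming from the $\mathcal{A}'$ or $\mathcal{B}$ generators of $(FG)^+$) contaminate the result. Since the statement explicitly cites Lemma~4 and Lemma~5 of \cite{BJ12} as the source, I expect the bulk of this combinatorial matching to be imported from there, with the new content being the verification that the oriented involution and the presence of the $Q_8 \times E$ factor $a$ do not disrupt the argument, which reduces to the sign analysis sketched above.
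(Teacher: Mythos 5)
Your high-level architecture (induction on $n$, a base case coming from a bracket of two symmetric elements, an inductive step matched to the recursion defining $x_{n,n}$) is indeed how this kind of statement is proved in \cite{BJ12}, but two of your key assertions are false, and they sit exactly where the proof has to do its work. First, the elements $a(h-h^{-1})$ with $a\in (Q_8\times E)\setminus \zeta(Q_8\times E)$ and $h\in P$ are \emph{not} symmetric, and they are not the elements of $\mathcal{C}$ in Lemma \ref{lema5Juhasz}: those are $ag(h-h^{-1})$, with a factor $g\in G\setminus N$ that you dropped. Since $a,h\in N$, the involution acts classically on them, so $\left(a(h-h^{-1})\right)^{*}=a^{-1}(h^{-1}-h)=-ca(h-h^{-1})\neq a(h-h^{-1})$; a bracket of such elements tells you nothing about $\gamma^{n}((FG)^+)$. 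The symmetric building blocks actually available inside $FN$ are the elements $a(h+a^{2}h^{-1})$ of $\mathcal{A}$ (or, if you insist on $\mathcal{C}$, the genuine elements $ag(h-h^{-1})$). Second, your mechanism is misattributed: $G\simeq \left<Q_8,g\right>\times E\times P$ is a direct product, so $a$ \emph{commutes} with every $P$-factor; conjugating $h-h^{-1}$ by $a$ does nothing and produces no signs. What actually creates both the symmetrizer and the factor $(1-c)$ is a bracket of \emph{two} elements whose $Q_8\times E$ parts fail to commute: if $(a,b)\neq 1$ then $ba=abc$, and using $(h+ch^{-1})(1-c)=(h-h^{-1})(1-c)$ one gets
\begin{equation*}
\left[a(h_1+ch_1^{-1}),\,b(h_2+ch_2^{-1})\right](1-c)=\bigl(f_{1,2}+f_{2,1}\bigr)(1-c)ab=x_{2,2}\cdot f_{1,2}(1-c)ab,
\end{equation*}
so the Lie bracket becomes the symmetrized sum precisely after multiplication by $(1-c)$. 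Noncentrality of a single $a$ is useless without a noncommuting partner $b$; the same relation $ba=abc$ is what drives the inductive step.

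Beyond these errors, you should know that the paper does not reprove the statement at all, and your anticipated ``new content'' is exactly what its argument avoids. Every object in the statement ($M_n$, $x_{n,n}$, $c$) lives inside $FN$, where $N\simeq Q_8\times E\times P$ by Theorem \ref{teorliengelQ8+}, and on $FN$ the oriented involution coincides with the classical one. Hence $(FN)^+$ (classical symmetric elements) is contained in $(FG)^+$, so $\gamma^{n}((FN)^+)\subseteq \gamma^{n}((FG)^+)$, and the statement is literally Lemma 4 of \cite{BJ12} applied to $FN$, followed by this containment. No verification that the orientation ``does not disrupt the combinatorics'' is needed, because the orientation never enters: the entire computation takes place in the kernel $N$ of $\sigma$.
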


\begin{lema}\label{lema6Juhasz}
If $|P|=p^k$, then $\widehat{P}(1-c)a\in \gamma^{k(p-1)}((FG)^+)$
for some $a\in Q_8\times E$
\end{lema}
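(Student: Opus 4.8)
The plan is to exhibit a single generator $m$ of $M_n$ with $n=k(p-1)$ whose image under $x_{n,n}$ is a nonzero scalar multiple of $\widehat{P}(1-c)a$, and then to quote the preceding lemma (i.e. \cite[Lemma~4]{BJ12}) to place it in $\gamma^{k(p-1)}((FG)^+)$. First I would write $\widehat P$ as an explicit product of augmentation factors. Since $P$ is a finite $p$-group of order $p^k$, I choose a polycyclic generating sequence $z_1,\dots,z_k$ refining a chief series, so that every element of $P$ has a unique normal form $z_1^{a_1}\cdots z_k^{a_k}$ with $0\le a_i<p$; expanding the product accordingly yields $\widehat P=\prod_{i=1}^k(1+z_i+\cdots+z_i^{p-1})$. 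In characteristic $p$ one has the polynomial identity $1+t+\cdots+t^{p-1}=(t-1)^{p-1}$ (because $t^p-1=(t-1)^p$), hence $\widehat P=\prod_{i=1}^k (z_i-1)^{p-1}$, a product of exactly $n=k(p-1)$ factors. I also record the absorbing identity $\widehat P\,u=\varepsilon(u)\widehat P$ for all $u\in FP$, where $\varepsilon$ denotes the augmentation.

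Next I take $a\in(Q_8\times E)\setminus\zeta(Q_8\times E)$ (so that $a^2=c$) and form the generator
$$m=(z_1-z_1^{-1})^{p-1}(z_2-z_2^{-1})^{p-1}\cdots(z_k-z_k^{-1})^{p-1}(1-c)a\in M_n .$$
Using $z_i-z_i^{-1}=(z_i-1)(1+z_i^{-1})$ (the two factors commute, being powers of $z_i$), the aim is to show that $x_{n,n}\cdot m=\lambda\,\widehat P(1-c)a$ for some $\lambda\neq0$ in $F$. When $P$ is abelian this is a direct computation: all factors commute, so $m$ is fixed by the $S_n$-action and $x_{n,n}\cdot m=2^{\,n-1}m$ (the sum of the coefficients of $x_{n,n}$ being $2^{\,n-1}$, by the defining recursion). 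Reorganizing the commuting factors gives $m=\bigl(\prod_i(z_i-1)^{p-1}\bigr)\bigl(\prod_i(1+z_i^{-1})^{p-1}\bigr)(1-c)a=\widehat P\,\bigl(\prod_i(1+z_i^{-1})^{p-1}\bigr)(1-c)a$, and the absorbing identity collapses this to $2^{\,n}\widehat P(1-c)a$, whence $\lambda=2^{\,2n-1}\neq0$. Once this is in hand, the preceding lemma places $x_{n,n}\cdot m$ in $\gamma^{n}((FG)^+)(1-c)$; dividing by $\lambda$ and reabsorbing the factor $(1-c)$ as in \cite{BJ12} then yields $\widehat P(1-c)a\in\gamma^{k(p-1)}((FG)^+)$, as required.

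The main obstacle is the identity $x_{n,n}\cdot m=\lambda\,\widehat P(1-c)a$ when $P$ is \emph{not} abelian. In that case the factors $(z_i-z_i^{-1})$ no longer commute, $m$ is no longer $S_n$-invariant, and one cannot simply pull all the $(z_i-1)$ together to build $\widehat P$. The difference between two orderings of the factors is a Lie commutator $[\,z_i-z_i^{-1},\,z_j-z_j^{-1}\,]$, which involves the group commutators $(z_i,z_j)$ and therefore lies strictly deeper in the augmentation (Jennings) filtration of $FP$. The point that must be controlled is that, after applying the symmetrizer $x_{n,n}$ and projecting onto the one-dimensional socle $F\widehat P$ of $FP$, these deeper contributions neither cancel the socle term nor introduce a spurious one, so that the coefficient of $\widehat P(1-c)a$ remains the nonzero scalar computed in the abelian model. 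This combinatorial bookkeeping for the symmetrizer over a noncommutative $p$-group is precisely what the detailed estimates of \cite{BJ12} furnish, and I would import them to complete the verification.
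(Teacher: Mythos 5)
Your proposal does not match the paper's treatment of this statement, because the paper does not prove it at all: since $N\simeq Q_8\times E\times P$ and the oriented involution restricts to the classical involution on $FN$, the paper simply observes $(FN)^+\subseteq (FG)^+$ (hence $\gamma^n((FN)^+)\subseteq\gamma^n((FG)^+)$) and quotes the statement verbatim from Lemma~5 of \cite{BJ12}, which is proved there for the classical involution on $F(Q_8\times E\times P)$. Your attempt to reconstruct the underlying proof is partly successful: the abelian case is correct and complete (the factorization $\widehat{P}=\prod_i(z_i-1)^{p-1}$, the identity $m=2^n\widehat{P}(1-c)a$, and $x_{n,n}\cdot m=2^{2n-1}\widehat{P}(1-c)a$ all check out), and the ``reabsorption'' of $(1-c)$ that you defer to \cite{BJ12} is in fact easy: $1-c$ is central and symmetric, so $[w,s](1-c)=[w,s(1-c)]$ shows $\gamma^n((FG)^+)(1-c)\subseteq\gamma^n((FG)^+)$.

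The genuine gap is the nonabelian case, and your proposed remedy --- ``import the detailed estimates of \cite{BJ12}'' --- is circular here: the statement being proved \emph{is} Lemma~5 of \cite{BJ12}, so either citing that paper is allowed (in which case one cites the lemma itself, as the paper does, and your construction is superfluous) or it is not (in which case your proof is incomplete exactly where it is hard). Moreover, the mechanism you sketch would not close it. First, filtration-depth control fails for general $P$: swapping two degree-one factors costs nothing in the Jennings filtration, since $(h-1)(g-1)-(g-1)(h-1)=gh\bigl((h,g)-1\bigr)$ lies only in $\Delta(P)^2$; this is precisely why the paper must prove Lemma~\ref{lema5.3.4} under the hypothesis that $P$ is \emph{powerful} and restricts part (iii) of its main theorem to that case. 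Second, even if you showed the coefficient of $\widehat{P}$ in $x_{n,n}\cdot m$ is nonzero, that is not the assertion: the lemma requires the exact element $\widehat{P}(1-c)a$ to lie in $\gamma^{k(p-1)}((FG)^+)$, and from $\lambda\widehat{P}(1-c)a+w\in\gamma^{k(p-1)}((FG)^+)$ with $w\neq 0$ you cannot discard $w$, because no projection onto the socle preserves $\gamma^{k(p-1)}((FG)^+)$ (every candidate multiplier $\widehat{H}$ with $1\neq H\leq P$ is central and symmetric but has augmentation $|H|=0$ in characteristic $p$). What actually works is an \emph{exact} identity: order the blocks along a chief series with the central generator $z_1$ innermost; then every bracket word in $x_{n,n}\cdot m$ carries the factor $(z_1-z_1^{-1})^{p-1}=2^{p-1}\widehat{\langle z_1\rangle}$, and $\widehat{\langle z_1\rangle}$ annihilates every commutator correction produced by the later blocks (these are multiples of $z_1^j-1$), so an induction on $k$ gives $x_{n,n}\cdot m=2^{2n-1}\widehat{P}(1-c)a$ exactly, for arbitrary finite $p$-groups. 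Nothing playing this role appears in your proposal.
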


We recall that the augmentation ideal $\Delta(P)$ of a finite
$p$-group $P$ is a nilpotent ideal, see \cite[Theorem 6.3.1]{PS02},
we will denote by $\tnil(P)$ its nilpotency index. Also, we remind
that a finite $p$-group $P$, is called \emph{powerful} if
$P'\subseteq P ^p$.
 Let $P$  be a powerful group. We denote with $D_i=D_i(FP)$  the
$i$-th dimensional subgroup. By Theorem 5.5 in \cite{J41}, $D_1=P$
and for $n>1$,
$$D_n=\left<(D_{n-1},P),(D_{\lceil \frac{n}{p}\rceil})^p\right>.$$
It can be showed that, $(P^{p^i})^{p^j}=P^{p^{i+j}}$ and
$(P^{p^i},P)\subseteq P^{p^{i+1}}$ for every pair $i,j$. So, if
$p^{i-1}<n\leq p^i$ then $D_n=P^{p^i}$.
\begin{lema}\label{lema5.3.4}
Let $P$ be a powerful group and $h_i-1\in\Delta(P)^{k_i}$ and
$h_j-1\in \Delta(P)^{k_j}$, where $k_i$ and $k_j$ are positive
integers. Then
\begin{equation}\label{eq5.24}
(h_i-1)(h_j-1)\equiv (h_j-1)(h_i-1)\pmod{\Delta(P)^{k_i+k_j+1}}.
\end{equation}
\end{lema}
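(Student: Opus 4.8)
The plan is to reduce the asserted congruence to a statement about the single group commutator $(h_i,h_j)$, and then to locate that commutator in a sufficiently deep dimension subgroup of $P$ by exploiting that $P$ is powerful and that $p$ is odd.

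First I would expand the two products and observe that the linear terms cancel, so that
$$(h_i-1)(h_j-1)-(h_j-1)(h_i-1)=h_ih_j-h_jh_i=h_jh_i\big((h_i,h_j)-1\big),$$
the last equality coming from $h_jh_i(h_i,h_j)=h_ih_j$. Since $h_jh_i$ is a unit of $FP$ and each $\Delta(P)^n$ is a two-sided ideal, the left-hand side lies in $\Delta(P)^{k_i+k_j+1}$ if and only if $(h_i,h_j)-1\in\Delta(P)^{k_i+k_j+1}$; by the defining property of the dimension subgroups this is the same as $(h_i,h_j)\in D_{k_i+k_j+1}$. Thus it suffices to establish this single membership.

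Next I would translate the hypotheses into the dimension subgroup filtration. The assumption $h_i-1\in\Delta(P)^{k_i}$ says exactly that $h_i\in D_{k_i}$, and likewise $h_j\in D_{k_j}$. Choosing $a,b$ with $p^{a-1}<k_i\le p^a$ and $p^{b-1}<k_j\le p^b$, the powerful-group description recalled above gives $D_{k_i}=P^{p^a}$ and $D_{k_j}=P^{p^b}$, so $h_i\in P^{p^a}$ and $h_j\in P^{p^b}$. Put $c=\max\{a,b\}$. Then one of $h_i,h_j$ lies in $P^{p^c}$ and the other in $P$, so, using $(X,Y)=(Y,X)$ together with the relation $(P^{p^c},P)\subseteq P^{p^{c+1}}$ noted earlier, I obtain $(h_i,h_j)\in P^{p^{c+1}}=D_{p^{c+1}}$. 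Finally, since the dimension subgroups form a decreasing chain, $D_{p^{c+1}}\subseteq D_{k_i+k_j+1}$ as soon as $k_i+k_j+1\le p^{c+1}$, and indeed $k_i+k_j+1\le p^a+p^b+1\le 2p^c+1\le p^{c+1}$, where the last step is $p^c(p-2)\ge 1$. This yields $(h_i,h_j)\in D_{k_i+k_j+1}$, completing the argument.

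The delicate point, and the only place where the hypotheses are really used, is this final gap of size $+1$: the generic commutator estimate $(D_m,D_n)\subseteq D_{m+n}$ would give only $(h_i,h_j)-1\in\Delta(P)^{k_i+k_j}$, one short of what is needed. The improvement to $k_i+k_j+1$ rests on the sharp jump $D_n=P^{p^i}$ valid for powerful $P$, which pushes the commutator all the way into $P^{p^{c+1}}$, and on the inequality $2p^c+1\le p^{c+1}$, which fails for $p=2$ but holds for every odd $p$ — precisely the standing assumption $\mathrm{char}(F)=p\neq 2$. I expect this counting, rather than the algebraic identities, to be where the care is required.
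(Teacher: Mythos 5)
Your proof is correct, but the mechanism behind the key step differs from the paper's. Both arguments start with the same reduction: expand the products, write the difference as $h_jh_i\big((h_i,h_j)-1\big)$, and reduce the congruence to showing $(h_i,h_j)\in D_{k_i+k_j+1}$. The paper then proves the general containment $(D_i,D_j)\subseteq D_{i+j+1}$ by a case analysis on whether $i$ and $j$ are powers of $p$: if, say, $i$ is not a $p$-power, the dimension series does not jump there, so $D_i=D_{i+1}$ and the generic estimate $\Delta(P)^m\Delta(P)^n\subseteq\Delta(P)^{m+n}$ already gives the claim; if both are $p$-powers, then $i+j$ is not one (this is where oddness of $p$ enters for the paper, since $p^a+p^b$ is never a $p$-power when $p>2$), so $D_{i+j}=D_{i+j+1}$ and the generic estimate again suffices. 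You instead push the commutator much deeper: identifying $D_{k_i}=P^{p^a}$ and $D_{k_j}=P^{p^b}$, you invoke the group-theoretic relation $(P^{p^c},P)\subseteq P^{p^{c+1}}$ with $c=\max\{a,b\}$ to place $(h_i,h_j)$ in $P^{p^{c+1}}=D_{p^{c+1}}$, and then finish with the arithmetic bound $k_i+k_j+1\le 2p^c+1\le p^{c+1}$, which is where oddness of $p$ enters for you. Both routes rest on the same Jennings-type description of the dimension subgroups of a powerful group that the paper records just before the lemma; the paper's case analysis needs only the ideal-theoretic product estimate, whereas yours trades that for the commutator relation among the subgroups $P^{p^i}$ plus a counting inequality, and in return yields the slightly stronger conclusion $(h_i,h_j)\in P^{p^{c+1}}$. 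Your closing diagnosis is accurate: both proofs break at $p=2$, yours at the inequality $2p^c+1\le p^{c+1}$ and the paper's because $1+1$ is a $2$-power.
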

\begin{proof}
First, we prove that $(D_i,D_j)\subseteq D_{i+j+1}$, for every
$i,j$. Take $h_i\in D_i$ and $h_j\in D_j$. We get the following
equation
\begin{equation}\label{eq5.25}
(h_i,h_j)-1=h_i^{-1}h_j^{-1}((h_i-1)(h_j-1)-(h_j-1)(h_i-1)).
\end{equation}

If either $i$ or $j$, say $i$, is not a power of  $p$, then $h_i\in
D_i=D_{i+1}$, so by \eqref{eq5.25}, $(h_i,h_j)-1\in
\Delta(P)^{i+j+1}$; thus $(h_i,h_j)\in D_{i+j+1}$. If both $i$ and
$j$ are powers of $p$, then $i+j$ cannot be a power of $p$ and
consequently $D_{i+j}=D_{i+j+1}$. By \eqref{eq5.25} follows
$(h_i,h_j)\in D_{i+j+1}$; therefore our claim is proved.

Let $h_i-1\in \Delta(P)^{k_i}$ and $h_j-1\in \Delta(P)^{k_j}$ for
some positive integers $k_i,k_j$. Then
\begin{equation*}
(h_i-1)(h_j-1)=(h_j-1)(h_i-1)+h_jh_i((h_i,h_j)-1),
\end{equation*}
and as $(h_i,h_j)\in D_{k_i+k_j+1}$, the result follows.
\end{proof}

Now we can prove our main result in this section.
\begin{teor}
Let $F$ be a field of characteristic $p>2$. Consider
 the group algebra $FG$ with an oriented classical involution. Assume
that $Q_8\subseteq G$, $(FG)^+$ is Lie nilpotent and the Sylow
$p$-group $P$ of $G$ is of order $p^m$, with $m\geq 1$. Then
\begin{enumerate}[(i)]
\item $1+m(p-1)\leq \t((FG)^+)\leq \tl((FG)^+)\leq \tnil(P)$ and $\cl(\mathcal{U}^+(FG))\leq \tnil(P)-1$.
\item If $\t((FG)^+)=\tnil(P)$, then $\cl(\mathcal{U}^+(FG))+1=\t((FG)^+)$.
\item If $P$ is powerful, then $\t((FG)^+)=\tnil(P)$.
\item If $P$ is abelian, then, for all $k\geq 2$, the $F$-space
$\gamma^k((FG)^+)$ is generated by the set
\begin{equation*}
\begin{split}
\mathcal{M}_k=&\{(h_1-h_1^{-1})\cdots (h_k-h_k^{-1})(1-a^2)a:h_i\in
P, a\in (Q_8\times E)\setminus \zeta(Q_8\times E)\}\cup \\
&\{g(h_1-h_1^{-1})\cdots (h_k-h_k^{-1})(1-a^2)a:h_i\in P, a\in
(Q_8\times E)\setminus \zeta(Q_8\times E)\}.
\end{split}
\end{equation*}
\end{enumerate}
\end{teor}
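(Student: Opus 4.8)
The plan is to handle the four parts separately; parts (i)--(iii) are short consequences of the preliminary lemmas, while (iv) is the computational heart. For (i) I would split the chain into an upper and a lower estimate. The upper bound comes from Lemma \ref{lema5Juhasz}: since $\Delta(P)^{\tnil(P)}=0$ it gives $((FG)^+)^{(\tnil(P))}\subseteq FG\,\Delta(P)^{\tnil(P)}=0$, hence $\tl((FG)^+)\le\tnil(P)$; together with the general inequality $\t\le\tl$ and with \eqref{eq1}, which yields $\cl(\mathcal U^+(FG))<\tl((FG)^+)\le\tnil(P)$, both the middle inequalities and the class bound follow at once. For the lower bound $1+m(p-1)\le\t((FG)^+)$ I would apply Lemma \ref{lema6Juhasz} with $|P|=p^m$, placing $\widehat P(1-c)a$ in $\gamma^{m(p-1)}((FG)^+)$; it then suffices to see this element is nonzero, which is immediate because $c\notin P$ makes the supports of $\widehat P$ and $\widehat P c$ disjoint, so $\widehat P(1-c)\ne0$ and multiplication by the group element $a$ preserves this. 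Thus $\gamma^{m(p-1)}((FG)^+)\ne0$ and $\t((FG)^+)\ge 1+m(p-1)$.

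For (iii) the point is that powerfulness forces the symmetrization operator to act as a nonzero scalar. Put $t=\tnil(P)$ and choose $h_1,\dots,h_{t-1}\in P$ with $(h_1-1)\cdots(h_{t-1}-1)\ne0$, possible since $\Delta(P)^{t-1}\ne0$; here each factor necessarily lies in $\Delta(P)\setminus\Delta(P)^2$, for a factor of higher degree would push the product into $\Delta(P)^t=0$. By \eqref{eq5.5} one has $h_i-h_i^{-1}\equiv 2(h_i-1)\pmod{\Delta(P)^2}$, so the generator $w=f_{1,\dots,t-1}(1-c)a$ of $M_{t-1}$ equals $2^{t-1}(h_1-1)\cdots(h_{t-1}-1)(1-c)a$ (every correction lies in $\Delta(P)^t=0$) and is nonzero. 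Lemma \ref{lema5.3.4} shows that any reordering of these $t-1$ degree-one factors alters the product only by an element of $\Delta(P)^t=0$, so every permutation fixes $w$; since $x_{t-1,t-1}$ is a sum of $2^{t-2}$ permutations, the lemma giving $x_{n,n}M_n\subseteq\gamma^n((FG)^+)(1-c)$ yields $x_{t-1,t-1}\cdot w=2^{t-2}w\ne0$ (using $p\ne2$). Hence $\gamma^{t-1}((FG)^+)\ne0$, so $\t((FG)^+)\ge\tnil(P)$, and with (i) equality holds.

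For (ii) I would first note that $\t((FG)^+)=\tnil(P)=:t$ forces $\t=\tl=\tnil(P)$ through (i), so $\cl(\mathcal U^+(FG))\le t-1$ is already known and only the reverse inequality remains. The idea is to realize a nonzero length-$(t-1)$ Lie commutator as a group commutator of symmetric units. Because $P$ is a central direct factor, $FG\,\Delta(P)$ is a nilpotent ideal, so $u=1+\alpha$ is a symmetric unit for every symmetric $\alpha\in FG\,\Delta(P)$. Starting from a nonzero $[\alpha_1,\dots,\alpha_{t-1}]\in\gamma^{t-1}((FG)^+)$ with $\alpha_i$ symmetric and, by \eqref{contencion1}, taken in $FG\,\Delta(P)$ (central summands drop out of the bracket), I would set $u_i=1+\alpha_i$ and expand the iterated group commutator through $(x,y)=1+x^{-1}y^{-1}[x,y]$. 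The leading term is a unit times $[\alpha_1,\dots,\alpha_{t-1}]\in FG\,\Delta(P)^{t-1}$, while every correction term lies in $FG\,\Delta(P)^{t}=0$; hence $(u_1,\dots,u_{t-1})=1+(\text{unit})\,[\alpha_1,\dots,\alpha_{t-1}]\ne1$, so $\cl(\mathcal U^+(FG))\ge t-1$ and equality follows.

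Finally, for (iv) with $P$ abelian I would argue by induction on $k$ that $\gamma^k((FG)^+)$ coincides with the $F$-span of $\mathcal M_k$. The base case $k=2$ is a direct evaluation of $[(FG)^+,(FG)^+]$ on the spanning set $\widetilde{\mathcal S}=\mathcal A'\cup\mathcal B\cup\mathcal C\cup\zeta(Q_8\times E)$ from Lemma \ref{lema5Juhasz}: central summands bracket to $0$, brackets of two noncentral elements of $Q_8\times E$ produce the factor $(1-a^2)=(1-c)$ (for instance $[x,y]=yx^{-1}(1-c)$ in $Q_8$), and brackets involving the $\mathcal B,\mathcal C$ generators introduce the factors $h-h^{-1}$ and, through their $g$-part, the translates by $g$. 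For the inductive step I would bracket the two families defining $\mathcal M_k$ against the generators of $\widetilde{\mathcal S}$; since $P$ is central and abelian its factors only accumulate and commute, and the sole new noncommutativity stems from $Q_8\times E$, which again contributes only the factor $(1-c)$, so that $[\langle\mathcal M_k\rangle_F,(FG)^+]\subseteq\langle\mathcal M_{k+1}\rangle_F$; the reverse inclusion comes from exhibiting each generator of $\mathcal M_{k+1}$ as such a bracket. I expect this exhaustive bookkeeping---tracking simultaneously the $N$-part and the $g$-translated part and verifying that nothing outside $\mathcal M_{k+1}$ survives---to be the main obstacle of the theorem, the other three parts being comparatively immediate from the preliminary lemmas.
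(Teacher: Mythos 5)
Your arguments for parts (i)--(iii) are correct and follow essentially the paper's own route: (i) is exactly Lemma \ref{lema6Juhasz} plus Lemma \ref{lema5Juhasz} plus inequality \eqref{eq1}; (iii) is the paper's computation (reduce $f_{1,\ldots,t-1}$ to $2^{t-1}(h_1-1)\cdots(h_{t-1}-1)$ via \eqref{eq5.5}, kill reordering errors with Lemma \ref{lema5.3.4}, and feed the result into $x_{n,n}M_n\subseteq\gamma^n((FG)^+)(1-c)$); and (ii) rests on the same congruence $(u_1,\ldots,u_n)\equiv 1+[\alpha_1,\ldots,\alpha_n]\pmod{FG\Delta(P)^{n+1}}$ that the paper proves by induction. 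One point in your (ii) needs more care than you give it: you need each $\alpha_i$ to be symmetric \emph{and} in $FG\Delta(P)$ so that $u_i=1+\alpha_i$ is a symmetric unit, but the containment \eqref{contencion1} by itself does not provide a symmetric $FG\Delta(P)$-component. The claim is true, either because in the spanning set $\widetilde{\mathcal{S}}$ the central summands ($\widehat{a}$, $a(g-g^{-1})$, the elements of $\zeta(Q_8\times E)$) are themselves symmetric, or by symmetrizing: $FG\Delta(P)$ and $\zeta(FG)$ are $*$-invariant, so $\alpha=\gamma+z$ yields $\alpha=\frac{1}{2}(\gamma+\gamma^*)+\frac{1}{2}(z+z^*)$ with $\frac{1}{2}(\gamma+\gamma^*)$ symmetric in $FG\Delta(P)$. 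The paper sidesteps this by writing down the concrete units $u_i=1-a_i(1+a_i^2)+x_i$.

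The genuine gap is part (iv). What you offer there is a strategy, not a proof: you describe the same induction the paper uses and then defer the ``exhaustive bookkeeping,'' but that bookkeeping \emph{is} the content of (iv); none of the preliminary lemmas performs it. What is missing are the explicit bracket identities: for the base case, \eqref{eq5.21}, \eqref{eq5.28} and \eqref{eq5.29}, which evaluate the brackets of pairs of $\mathcal{A}$- and $\mathcal{C}$-type generators and show each is, up to sign and a possible factor $g$, equal to $(h_2-h_2^{-1})(h_1-h_1^{-1})(1-c)ab$; and for the inductive step, \eqref{eq5.22}, \eqref{eq5.30} and \eqref{eq5.31}, into which one substitutes $\alpha=f_{1,\ldots,n-1}$ to get $[\mathcal{M}_{n-1},(FG)^+]=\mathcal{M}_n$ in both directions (the reverse inclusion also needs that the scalars $\pm 2$ appearing there are invertible and that any noncentral $a$ factors as $a=a'b$ with $a',b$ noncentral and $(a',b)\neq 1$). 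The case analysis is lighter than you fear: since $P$ is central here, the $\mathcal{B}$-generators, the elements of $\zeta(Q_8\times E)$, and the $\mathcal{A}$-generators with $a^2=1$ are central in $FG$ and vanish from every bracket, so only $\mathcal{A}$- and $\mathcal{C}$-generators with $a^2=c$ ever occur --- six short identities in all. Until these are carried out, part (iv) remains unproved.
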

\begin{proof}
From Theorem \ref{teorliengelQ8+}, we know that $N\simeq Q_8\times
E\times P$, where $E^2=1$, $P$ is a finite $p$-group
 and there exists $g\in G\setminus N$ such that  $G\simeq
\left<Q_8,g\right>\times E\times P$, $(g,x)=(g,y)=1$ and $g^2=x^2$.
By Lemma \ref{lema6Juhasz}, there exists $0 \neq
\widehat{P}(1-c)a\in \gamma^{m(p-1)}((FG)^+)$ for some $a\in
Q_8\times E$. In this way, $1+m(p-1)\leq \t((FG)^+)$. Furthermore,
Lemma \ref{lema5Juhasz} implies that $\tl((FG)^+)\leq \tnil(P)$.

To show (ii), consider the symmetric elements
 \linebreak
$u_i=1-a_i(1+a_i^2)+x_i$, where $x_i=a_i(h_i+a_i^2h_i^{-1})\in
\mathcal S$, $a_i\in Q_8\times E$ and $h_i\in P$. Thus,
$u_i=1+a_i(h_i-1)+a_i^3(h_i^{-1}-1)\in 1+FG\Delta(P)$. Since
$FG\Delta(P)$ is a nilpotent ideal, we get that $1+FG\Delta(P)$ is a
normal subgroup of $\mathcal{U}(FG)$ and in consequence $u_i$ is a
unit in $FG$. We will prove, by induction, that
\begin{equation}\label{eq5.24}
 (u_1,u_2,\ldots,u_n)\equiv 1+[x_1,x_2,\ldots,x_n] \pmod{
FG\Delta(P)^{n+1}}.
\end{equation}

Since $u_1^{-1}u_2^{-1}\equiv 1 \pmod{FG\Delta(P)}$, Lemma
\ref{lema5Juhasz} implies that
\begin{align*}
(u_1,u_2)&=1+u_1^{-1}u_2^{-1}[u_1,u_2]=1+(u_1^{-1}u_2^{-1}-1)[u_1,u_2]+[u_1,u_2]\\
&\equiv 1+[u_1,u_2]\pmod{FG\Delta(P)^3}.
\end{align*}
We recall that $\widehat{a}_i=1+a_i+a_i^2+a_i^3$ and $1+a_i^2$, for
each $a_i\in Q_8\times E$, are central elements of $FG$. So
\begin{align*}
[u_1,u_2]&=[1-a_1(1+a_1^2)+x_1,1-a_2(1+a_2^2)+x_2]\\
&=[x_1,x_2]+[a_1(1+a_1^2),a_2(1+a_2^2)]-[x_1,a_2(1+a_2^2)]-[a_1(1+a_1^2),x_2]\\
&=[x_1,x_2]+[\widehat{a}_1,\widehat{a}_2]-[x_1,\widehat{a}_2]-[\widehat{a}_1,x_2]\\
&=[x_1,x_2],
\end{align*}
which proves the congruence \eqref{eq5.24} when $n=2$.

Suppose that \eqref{eq5.24}, is true to $n-1$; that is
\begin{equation}
(u_1,u_2,\ldots,u_{n-1})\equiv
1+[x_1,x_2,\ldots,x_{n-1}]\pmod{FG\Delta(P)^n}.
\end{equation}
Then, Lemma \ref{lema5Juhasz} and as
$(u_1,u_2,\ldots,u_{n-1})^{-1}u_n^{-1}-1\in FG\Delta(P)$ imply
\begin{align*}
(u_1,u_2,&\ldots,u_{n})\\&=1+((u_1,u_2,\ldots,u_{n-1})^{-1}u_n^{-1}-1)[(u_1,u_2,\ldots,u_{n-1}),u_n]+[(u_1,u_2,\ldots,u_{n-1}),u_n]\\
&\equiv 1+[(u_1,u_2,\ldots,u_{n-1}),u_n]\pmod{FG\Delta(P)^{n+1}}\\
&\equiv
1+[[x_1,x_2,\ldots,x_{n-1}],1-a_n(1+a_n^2)+x_n]\pmod{FG\Delta(P)^{n+1}}\\
&\equiv
1+[x_1,x_2,\ldots,x_n]-[[x_1,x_2,\ldots,x_{n-1}],\widehat{a}_n]\pmod{FG\Delta(P)^{n+1}}\\
&\equiv 1+[x_1,x_2,\ldots,x_n] \pmod{FG\Delta(P)^{n+1}},
\end{align*}
and the statement \eqref{eq5.24} is true for all $n\geq 2$.

Let $n=\tnil(P)-1$. If $\t((FG)^+)=\tnil(P)$, then there are
$x_1,\ldots,x_n\in \mathcal{S}$ such that $[x_1,\ldots,x_n]\neq 0$.
Thus, by the congruence \eqref{eq5.24},
$\gamma_n(\mathcal{U}^+(FG))\neq 1$. So $n\leq
\cl(\mathcal{U}^+(FG))$. Moreover, we know that
$\cl(\mathcal{U}^+(FG))< \tl((FG)^+)\leq \tnil(P)=n+1$ and we get
(ii).

Assume that $P$ is powerful. Then, by Lemma \ref{lema5.3.4}, we
obtain
\begin{equation*}
x_{n,n}f_{1,\ldots,n}(1-c)a\equiv
2^nf_{1,\ldots,n}(1-c)a\pmod{FG\Delta(P)^{n+1}}.
\end{equation*}
Furthermore, if $h_i-1\in \Delta(P)^{k_i}$, then by \eqref{eq5.5}
\begin{equation*}
h_i-h_i^{-1}=(h_i-1)-(h_i^{-1}-1)\equiv 2(h_i-1)
\pmod{\Delta(P)^{k_i+1}},
\end{equation*}
thus
\begin{align*}
x_{n,n}f_{1,\ldots,n}(1-c)a&\equiv 2^n
(h_1-h_1^{-1})\cdots(h_n-h_n^{-1})(1-c)a\\
&\equiv 2^{2n}(h_1-1)\cdots(h_n-1)(1-c)a \pmod{FG\Delta(P)^{n+1}}.
\end{align*}
It is clear that, if $n<\tnil(P)$, there exist $h_1,\ldots,h_n\in P$
such that $\prod_{i=1}^n(h_i-1)\neq 0$, and then $x_{n,n}M_n\neq 0$.
Thus, $\tnil(P)\leq\t((FG)^+)$  and (iii) follows.

Finally, assume that $P$ is abelian. Let $a,b\in (Q_8\times
E)\setminus\zeta(Q_8\times E)$, and $h_1,h_2\in P$, such that
$(a,b)\neq 1$. Then
\begin{equation}\label{eq5.21}
\begin{split}
[a(h_1+a^2h_1^{-1}),b(h_2+b^2h_2^{-1})]&=(h_2+b^2h_2^{-1})(h_1+a^2h_1^{-1})[a,b]\\
&=(h_2-h_2^{-1})(h_1-h_1^{-1})(1-c)ab.
\end{split}
\end{equation}
If $\alpha\in FP$, $h\in P$, then
\begin{equation}\label{eq5.22}
\begin{split}
[\alpha(1-c)a,b(h+b^2h^{-1})]&=\alpha(1-c)(h+b^2h^{-1})[a,b]\\
&=\alpha(h-h^{-1})(1-c)^2ab=2\alpha(h-h^{-1})(1-c)ab.
\end{split}
\end{equation}

\begin{equation}\label{eq5.28}
\begin{split}
[a(h_1+a^2h_1^{-1}),bg(h_2-h_2^{-1})]&=(h_2-h_2^{-1})(h_1+a^2h_1^{-1})[a,bg]\\
&=g(h_2-h_2^{-1})(h_1+ch_1^{-1})(1-c)ab\\
&=g(h_2-h_2^{-1})(h_1-h_1^{-1})(1-c)ab,
\end{split}
\end{equation}
and
\begin{equation}\label{eq5.29}
\begin{split}
[ag(h_1-h_1^{-1}),bg(h_2-h_2^{-1})]&=(h_2-h_2^{-1})(h_1-h_1^{-1})[ag,bg]\\
&=(h_2-h_2^{-1})(h_1-h_1^{-1})g^2(1-c)ab\\
&=-(h_2-h_2^{-1})(h_1-h_1^{-1})(1-c)ab.
\end{split}
\end{equation}
The equations \eqref{eq5.22}, \eqref{eq5.28} and \eqref{eq5.29}
imply that $\gamma^2((FG)^+)=\mathcal{M}_2$. Suppose that \linebreak
$\gamma^{n-1}((FG)^+)=\mathcal{M}_{n-1}$ for some $n\geq 3$. Take
$\alpha \in FP$, $h\in P$ and $a,b\in (Q_8\times E)\setminus
\zeta(Q_8\times E)$, such that $(a,b)\neq 1$. We get the following
equalities:
\begin{equation}\label{eq5.30}
\begin{split}
[\alpha(1-c)a,gb(h-h^{-1})]&=\alpha(h-h^{-1})(1-c)[a,gb]\\
&=g\alpha(h-h^{-1})(1-c)[a,b]\\
&=g\alpha(h-h^{-1})(1-c)^2ab\\
&=2g\alpha(h-h^{-1})(1-c)ab,
\end{split}
\end{equation}
and
\begin{equation}\label{eq5.31}
\begin{split}
[g\alpha(1-c)a,gb(h-h^{-1})]&=g^2\alpha(h-h^{-1})(1-c)[a,b]\\
&=c\alpha(h-h^{-1})(1-c)[a,b]\\
&=c\alpha(h-h^{-1})(1-c)^2ab\\
&=-2\alpha(h-h^{-1})(1-c)ab.
\end{split}
\end{equation}
By substituting $f_{1,\ldots,n-1}$ for $\alpha$ in \eqref{eq5.22},
\eqref{eq5.30} and \eqref{eq5.31}, we get that \linebreak
$[\mathcal{M}_{n-1},(FG)^+]=\mathcal{M}_n$ and therefore
\begin{equation*}
\gamma^n((FG)^+)=[\gamma^{n-1}((FG)^+),(FG)^+]=[\mathcal{M}_{n-1},(FG)^+]=\mathcal{M}_n,
\end{equation*}
as we wanted to prove.
\end{proof}

\section*{Acknowledgements}
The results of this paper are part of the author's Ph.D. thesis, at
the Instituto de Ma\-te\-má\-ti\-ca e Estatística of the
Universidade de São Paulo, under the guidance of Prof. César Polcino
Milies.  This work was partially supported by CAPES and CNPq. proc.
141857/2011-0 of Brazil.

\bibliographystyle{amsplain}
\bibliography{john}   

\end{document}